\documentclass[12pt, a4paper]{amsart}
\usepackage{amsmath,amssymb,amsfonts, float}
\usepackage[all]{xy}
\usepackage{caption}
\usepackage{enumerate}
\usepackage{mathpazo}
\usepackage{a4wide}
\usepackage{diagbox}
\usepackage{subcaption}

\usepackage{bm}
\usepackage{graphicx}
\usepackage[breaklinks=true]{hyperref}

\usepackage{xcolor}
\usepackage{multicol}
\usepackage{tabularx}

\usepackage{hyperref}
\usepackage[capitalize]{cleveref}

\usepackage[normalem]{ulem}

\setlength{\textheight}{9.5in}

\pagestyle{plain}


\newtheorem{thm}{Theorem}[section] 
\newtheorem*{thm*}{Theorem} 
\newtheorem{prop}[thm]{Proposition}
\newtheorem{lem}[thm]{Lemma}
\newtheorem{cor}[thm]{Corollary}

\theoremstyle{definition}
\newtheorem{definition}[thm]{Definition}
\newtheorem{expl}[thm]{Example}

\newtheorem{question}[thm]{Question}
\newtheorem{rem}[thm]{Remark}

\DeclareMathOperator{\C}{\mathbb{C}}
\DeclareMathOperator{\Z}{\mathbb{Z}}

\DeclareMathOperator{\F}{\mathbb{F}}

\DeclareMathOperator{\Hom}{{\rm Hom}}

\newcommand{\Ann}{{\rm Ann}}

    \DeclareFontFamily{U}{wncy}{}
    \DeclareFontShape{U}{wncy}{m}{n}{<->wncyr10}{}
    \DeclareSymbolFont{mcy}{U}{wncy}{m}{n}
    \DeclareMathSymbol{\Sha}{\mathord}{mcy}{"58}

\numberwithin{equation}{section}

\renewcommand{\i}{\mathrm{i}}

\newcommand{\soc}{{{\rm soc}}}
\newcommand{\Isoc}{{{\rm Isoc}}}

\DeclareSymbolFont{bbold}{U}{bbold}{m}{n}
\DeclareSymbolFontAlphabet{\mathbbold}{bbold}


\usepackage{hyperref}

\begin{document}
\title{Perfect state transfer on \\ gcd-graphs over a finite Frobenius ring}
 \author{Tung T. Nguyen, Nguy$\tilde{\text{\^{e}}}$n Duy T\^{a}n }

 \address{Department of Mathematics and Computer Science, Lake Forest College, Lake Forest, Illinois, USA}
 \email{tnguyen@lakeforest.edu}
 
  \address{
Faculty Mathematics and Informatics, Hanoi University of Science and Technology, 1 Dai Co Viet Road, Hanoi, Vietnam } 
\email{tan.nguyenduy@hust.edu.vn}

\thanks{TTN is partially supported by an AMS-Simons Travel Grant.  NDT is partially supported by the Vietnam National
Foundation for Science and Technology Development (NAFOSTED) under grant number 101.04-2023.21}

\keywords{Gcd-graphs, Perfect state transfer, Frobenius rings}
\subjclass[2020]{Primary 05C25, 05C50}

\maketitle
\begin{abstract}
The existence of perfect state transfer (PST) on quantum spin networks is a fundamental problem in mathematics and physics. Various works in the literature have explored PST in graphs with arithmetic origins, such as gcd-graphs over $\mathbb{Z}$ and cubelike graphs. In this article, building on our recent work on gcd-graphs over an arbitrary finite Frobenius ring, we investigate the existence of PST on these graphs. Our approach is algebraic in nature, enabling us to unify various existing results in the literature.
\end{abstract}
\section{\textbf{Introduction}}
Let $G$ be an undirected graph with adjacency matrix $A_G$. Let $F(t)$ be the \textit{continuous-time} quantum walk associated with $G$; namely $F(t) = \exp(\mathrm{i}A_Gt).$ There is perfect state transfer (PST) in graph $G$ if there are distinct vertices $a$ and $b$ and a positive real number $t$ such that $|F(t)_{ab}| = 1$. In other words, \(G\) is said to exhibit PST if it admits the transfer of a quantum state between two vertices without loss of information under the continuous-time quantum walk. The concept of PST was first introduced in \cite{landahn1, landahn}, where the authors investigated the problem of arranging \(N\) interacting qubits in a network that allows the perfect transfer of an arbitrary quantum state over the greatest possible distance. Since these pioneering works, the subject has generated extensive literature, particularly within the quantum computing and algebraic combinatorics communities (see \cite{godsil2012state} for some fundamental results about PST on graphs, and \cite{arnadottir2022state} for a survey on the state of the art of the problem). We also refer to \cite{BP1, BP2, BP3, pst-gcd-new-2, pst-gcd-new} for some foundational studies on this line of research. We note that some researchers define PST on $G$ using another matrix describing the \textit{discrete-time} quantum walk on $G$ (for example, see \cite{bhakta2025state}). In this paper, we follow Godsil's definition as stated above. It is interesting to note that $F(t)$ also appears quite unexpectedly in the complex-valued Kuramoto model, a standard model in non-linear dynamics that describes the synchronization of phase oscillators (\cite{budzinski2021simple, muller2021algebraic}).  By \cite[Theorem 5.2]{godsil2012state}, it is known that if a regular graph $G$ has PST, then $G$ must be integral; i.e, all of its eigenvalues must be integers. In particular, the class of Cayley graphs that have PST is a subset of the class of integral Cayley graphs.

Let us briefly recall a special class of integral graphs that have a long history in algebraic graph theory; namely the gcd-graphs introduced by Klotz and Sander in \cite{klotz2007some}. Let $n$ be a positive integer and $D$ a subset of proper divisors of $n$. The gcd-graph $G_n(D)$ is the graph with the following data.
\begin{enumerate}
\item The vertices of $G_n(D)$ are elements of the finite ring $\Z/n$.
\item Two vertices $a,b$ are adjacent if $\gcd(a-b, n) \in D.$ 
\end{enumerate}
Using the theory of Ramanujan sums, Klotz and Sander explicitly describe the spectrum of $G_n(D)$ (see \cite[Section 4]{klotz2007some}). Various works in the literature have utilized this explicit spectral description to study perfect state transfer (PST) on gcd-graphs (see, for example, \cite{bavsic2009perfect, godsil2012state, saxena2007parameters}).

In \cite{minavc2024gcd}, inspired by the analogy between number fields and function fields, we introduce the notion of gcd-graphs over $\F_q[x]$ as an analog of the gcd-graphs over $\Z$. Using the theory of Ramanujan and Gauss sums developed by Lamprecht in \cite{lamprecht1953allgemeine}, we are also able to describe the spectra of these gcd-graphs. Furthermore, we provide the necessary and sufficient conditions for a gcd-graph over $\F_q[x]$ to be integral (see \cite{nguyen2024integral}). We soon realize that the definition of gcd-graphs can in fact be defined over an arbitrary ring (we work out the details in \cite{nguyen2025gcd}). When the underlying ring $R$ is a finite Frobenius ring, their natural duality allows us to relate spectra of these generalized gcd-graphs to certain Ramanujan sums which can then be calculated explicitly (see \cite[Theorem 4.16]{nguyen2025gcd}). A direct consequence of this spectral analysis is that the gcd-graph $\Gamma(R,S)$ is integral, making it a promising candidate for graphs that have PST. We note that the integrality of $\Gamma(R,S)$ also follows from \cite[Theorem 1.1]{nguyen2024integral}, although that theorem does not provide a concrete spectral description. In this article, we develop an algebraic and number-theoretic approach to the problem of PST on gcd-graphs. As we will explain later in the article, this approach is more streamlined and, therefore, capable of unifying various existing results in the literature.

This is our first paper on this line of research. Here, we focus on establishing the fundamental foundations and applying them to study local Frobenius rings. The more general case, in the spirit of \cite{BP1, BP2, BP3}, will be discussed in our future work.

\subsection{Outline}
In \cref{sec:spectral}, we describe the spectral theory for $R$-circulant matrices, where $R$ is a finite Frobenius ring. We then apply this theory to establish a concrete criterion for the existence of perfect state transfer (PST) on a gcd-graph defined over $R$ (see \cref{prop:condition}). Utilizing this criterion, we develop in \cref{sec:necessary_conditions} some necessary conditions for the existence of PST on a gcd-graph. In particular, we demonstrate that PST can exist only between specific elements in $R$ (see \cref{prop:upper_bound}). In \cref{sec:local_rings}, we focus on the case where $R$ is a local Frobenius ring. Here, we identify a sufficient condition for a gcd-graph to possess PST (see \cref{thm:local}). Furthermore, in certain instances, we show that this condition is also necessary, and hence we completely classify gcd-graphs with PST in those cases. Finally, in \cref{sec:unitary}, we classify all unitary Cayley graphs that exhibit PST (see \cref{thm:unitary}).

\subsection{Code}
The code that we wrote to generate data and perform experiments with PST can be found in \cite{nguyen_pst}.  We remark that we have also verified all statements in this work with various concrete and computable examples.

\section{\textbf{Spectral analysis of Cayley graphs over a finite Frobenius ring}}  \label{sec:spectral}
Let $R$ be a finite Frobenius ring and $n$ a positive integer such that $nR = 0$. We will fix $n$ throughout this section, since we only need its exact value when describing the character group of $R$. Most of the time, $n$ will only appear implicitly.  In previous work \cite{nguyen2024integral, nguyen2025gcd}, we used the term \textit{finite symmetric $\mathbb{Z}/n$-algebra} to refer to $R$. However, to maintain consistency with the literature, we will adopt the term Frobenius ring instead. The equivalence between these two seemingly different definitions follows from the work of Lamprecht in \cite{lamprecht1953allgemeine} (see \cite{honold2001characterization} for historical insights and various ring and module theoretic characterizations of Frobenius rings).

By definition, being a Frobenius ring means that $R$ is a $\mathbb{Z}/n$-algebra equipped with a nondegenerate linear functional $\psi\colon R \to \mathbb{Z}/n$. This non-degenerate functional is crucial in our spectral analysis. In fact, its existence implies that the character group of $(R, +)$ is parametrized by $R$ itself, allowing us to treat $R$ similarly to $\mathbb{Z}/n$ (see \cite[Proposition 2.4]{nguyen2024integral}). Consequently, as explained below, several, although not all, techniques from \cite{bavsic2009perfect, godsil2012state} can be naturally adapted to this more general setting.

Let $G$ be a Cayley graph defined over $R$, namely $G= \Gamma(R,S)$ for some symmetric subset $S$ of $R.$ By definition, the adjacency matrix $A_G$ is an $R$-circulant matrix (see \cite{kanemitsu2013matrices, chebolu2022joins}). The entries of $A_G$ are uniquely determined by the first row $\vec{c}=[c_s]_{s \in R}$ where 
\[
c_r=
\begin{cases}
1 & \text{if } r \in S, \\
0 & \text{otherwise. } 
\end{cases}
\]
Since $R$ is a Frobenius ring and $nR=0$, every character $\widehat{\psi}$ of $R$ is of the form $\widehat{\psi}(s)= \zeta_n^{\psi(rs)}$ for some unique $r \in R$, where $\zeta_n=e^{2\pi\i/n}$ (see \cite{lamprecht1953allgemeine, nguyen2024integral}).  For such $r \in R$, let 
\[ \vec{v}_r = \frac{1}{\sqrt{|R|}}[\zeta_n^{\psi(rs)}]_{s \in R}^{T} \in \mathbb{C}^{|R|} .\] 
\begin{rem}
By \cite[Proposition 10]{nguyen2023equilibria}, for each $r \in R$,  $\vec{\theta}_r = \left(\frac{2 \pi \psi(rs)}{n} \right)_{s \in R}$ is an equilibrium point of the Kuramoto model on $G = \Gamma(R,S).$ It would be interesting to study the stability of these points when $S$ varies (see \cite{townsend2020dense} for an example of this investigation). 
\end{rem}

By the circulant diagonalization theorem (see \cite{kanemitsu2013matrices}), the set $\{\vec{v}_r : r \in R\}$ forms a normalized eigenbasis for all $A_G$. Furthermore,  the eigenvalues of $A_G$ are precisely
\[ \lambda_r = \sum_{s \in R} c_s \zeta_{n}^{\psi(rs)} = \sum_{s \in S} \zeta_n^{\psi(rs)}.\]
We note that, by definition, $\lambda_r$ depends on $S$. However, since $S$ is often clear from the context, we will omit it in the definition of $\lambda_r.$ 
Let $V = [v_r]_{r \in R} \in \C^{|R| \times |R|}$ be the matrix formed by this eigenbasis and $V^{*}$ be the conjugate transpose of $V$. Then we can write
\[ A_G = V D V^*=\sum_{r \in R} \lambda_r \vec{v}_r\vec{v}_r^{*},\]
here $D =\text{diag}([\lambda_r]_{r \in R})$ is the diagonal matrix formed by the eigenvalues $\lambda_r.$ We then have
\[ F(t)= \sum_{r \in R} e^{\mathrm{i} \lambda_r t} \vec{v}_r \vec{v}_r^{*}. \]
Hence
\[ F(t)_{s_1, s_2} = \frac{1}{|R|} \sum_{r \in R} e^{\mathrm{i} \lambda_r t} \zeta_n^{\psi((s_1-s_2)r)} = \frac{1}{|R|} \sum_{r \in R} e^{2 \pi \mathrm{i} \left(\lambda_r \frac{t}{2 \pi}+\frac{ \psi((s_1-s_2)r)}{n}\right)}. \]
By the triangle inequality, $|F(t)_{s_1, s_2}|=1$ if and only if $\lambda_r \frac{t}{2 \pi} + \frac{\psi(s_1-s_2)r}{n}$ are constant modulo $1.$ Furthermore, by symmetry, there exists perfect state transfer between $s_1$ and $s_2$ if and only if there exists perfect state transfer between $0$ and $s_2-s_1.$ In summary, our calculation shows the following criterion, which is a direct generalization of \cite[Theorem 4]{bavsic2009perfect}.
\begin{thm} \label{prop:condition}
    There exists perfect state transfer from $0$ to $s$ at time $t$ if and only if for all $r_1, r_2 \in R$
    \[ (\lambda_{r_1}-\lambda_{r_2}) \frac{t}{2 \pi} + \frac{\psi(s(r_1-r_2))}{n} \equiv 0 \pmod{1}.\]
\end{thm}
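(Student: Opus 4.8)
The plan is to unwind the formula for $F(t)_{s_1,s_2}$ that has already been derived in the paragraph preceding the statement, and then apply the equality case of the triangle inequality. Concretely, I would start from
\[
F(t)_{0,s} = \frac{1}{|R|}\sum_{r \in R} e^{2\pi \mathrm{i}\left(\lambda_r \frac{t}{2\pi} + \frac{\psi(sr)}{n}\right)},
\]
which is a sum of $|R|$ complex numbers each of modulus $1/|R|$. The triangle inequality gives $|F(t)_{0,s}| \le 1$, with equality if and only if all the summands point in the same direction, i.e. the phases $\lambda_r \frac{t}{2\pi} + \frac{\psi(sr)}{n}$ are all congruent to one another modulo $1$ as $r$ ranges over $R$. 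Rewriting "all pairwise congruent" as "congruent for every pair $r_1, r_2$" and subtracting yields exactly the stated condition
\[
(\lambda_{r_1} - \lambda_{r_2})\frac{t}{2\pi} + \frac{\psi(s(r_1 - r_2))}{n} \equiv 0 \pmod 1,
\]
using linearity of $\psi$ in its argument to combine $\psi(sr_1) - \psi(sr_2) = \psi(s(r_1 - r_2))$.

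The steps in order: first, record that $|F(t)_{0,s}| \le 1$ for all $t$ and all $s$ (this is automatic since $F(t)$ is unitary, or directly from the triangle inequality on the displayed sum); second, invoke the sharp equality condition for the triangle inequality for a sum of unit-modulus complex numbers — equality holds precisely when all terms share a common argument; third, translate "common argument" into the simultaneous congruences and then into the single pairwise congruence in the statement; fourth, note the reduction already explained in the text, that PST between $s_1$ and $s_2$ is equivalent to PST between $0$ and $s_2 - s_1$, so it suffices to treat the base point $0$. I would also remark that the condition is symmetric in $r_1 \leftrightarrow r_2$ and trivially satisfied when $r_1 = r_2$, so it is really a condition indexed by unordered pairs, matching \cite[Theorem 4]{bavsic2009perfect}.

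Honestly there is no serious obstacle here: the statement is essentially a bookkeeping repackaging of the computation carried out just above it, and the only mild subtlety is being careful that the equality case of the triangle inequality is an "if and only if" (which it is, for finitely many nonzero complex numbers), so that the criterion is both necessary and sufficient. If one wanted to be fully rigorous about the equality case, I would phrase it as: for complex numbers $z_1, \dots, z_N$ with $|z_j| = c > 0$, one has $\left|\sum_j z_j\right| = Nc$ if and only if $z_j / z_k \in \R_{>0}$ for all $j,k$, which for unit-modulus (up to scaling) numbers $z_j = c\, e^{2\pi \mathrm{i}\theta_j}$ means $\theta_j \equiv \theta_k \pmod 1$. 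Applying this with $N = |R|$, $c = 1/|R|$, and $\theta_r = \lambda_r \frac{t}{2\pi} + \frac{\psi(sr)}{n}$ finishes the proof. The whole argument is a few lines.
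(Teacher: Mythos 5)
Your proposal is correct and follows essentially the same route as the paper: the text preceding the theorem derives the same formula for $F(t)_{s_1,s_2}$, invokes the equality case of the triangle inequality to force the phases $\lambda_r\frac{t}{2\pi}+\frac{\psi(sr)}{n}$ to be constant modulo $1$, and reduces to the base point $0$ by translation symmetry. Your added care about the sharpness of the equality case is a fine touch but does not change the argument.
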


\section{\textbf{Necessary conditions for the existence of PST}} \label{sec:necessary_conditions}
In this section, we apply \cref{prop:condition} to impose some necessary conditions for gcd-graphs defined over $R$ to have PST. First, we recall their definition, which was introduced in \cite{nguyen2025gcd}.
\begin{definition}
Let $R$ be a finite Frobenius ring. Let $D= \{\mathcal{I}_1, \mathcal{I}_2, \ldots, \mathcal{I}_k \}$ be a set of distinct principal ideals in $R.$ The gcd-graph $G_{R}(D)$ is equipped with the following data.
\begin{enumerate}
    \item The vertex set of $G_{R}(D)$ is $R.$
    \item Two vertices $a,b$ are adjacent if there exists $\mathcal{I} \in D$ such that $R(a-b)= \mathcal{I}$ where $R(a-b)$ is the principal ideal generated by $a-b.$
\end{enumerate}
\end{definition}
We remark that a Cayley graph $\Gamma(R,S)$ defined over $R$ with a generating set $S$ is a gcd-graph if and only if $S$ is stable under the action of $R^{\times}$ (see \cite[Proposition 2.5]{nguyen2025gcd}.)  

Let $\Gamma(R,S) = G_{R}(D)$ be a gcd-graph. Let $\Delta$ be the abelian group generated by $r_1-r_2$ where $r_1$ and $r_2$ are elements of $R$ such that $\lambda_{r_1} = \lambda_{r_2}$. 
The following statement is a direct corollary of \cref{prop:condition}.

\begin{cor} \label{cor:orthogonal}
Let $\Gamma(R,S) = G_{R}(D)$ be a gcd-graph.   Suppose that there exists  perfect state transfer from $0$ to $s$ at time $t$. Then $\psi(sd)=0$ for all $d \in \Delta.$
\end{cor}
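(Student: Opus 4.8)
The plan is to apply \cref{prop:condition} directly and extract the claimed orthogonality relation from the special case $t$ arbitrary but fixed, using the defining property of $\Delta$. Suppose there is perfect state transfer from $0$ to $s$ at some time $t$. By \cref{prop:condition}, for \emph{all} pairs $r_1, r_2 \in R$ we have
\[
(\lambda_{r_1}-\lambda_{r_2})\frac{t}{2\pi} + \frac{\psi(s(r_1-r_2))}{n} \equiv 0 \pmod 1.
\]
First I would specialize this to pairs $r_1, r_2$ with $\lambda_{r_1} = \lambda_{r_2}$: the first term vanishes, so $\psi(s(r_1-r_2)) \equiv 0 \pmod n$, i.e. $\psi(s(r_1-r_2)) = 0$ in $\Z/n$. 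Since $\psi$ is $\Z/n$-linear and $s(r_1 - r_2) = (sr_1) - (sr_2)$, and since $\Delta$ is generated as an abelian group by all such differences $r_1 - r_2$, the map $d \mapsto \psi(sd)$ is an additive homomorphism $\Delta \to \Z/n$ vanishing on a generating set, hence vanishing on all of $\Delta$.

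The only mild point to be careful about is the passage from generators to the whole group: $\psi(s \cdot (-d)) = -\psi(sd)$ and $\psi(s(d+d')) = \psi(sd) + \psi(sd')$ by $\Z/n$-linearity of $\psi$ and distributivity of multiplication in $R$, so once $\psi(sd) = 0$ on each generator $d = r_1 - r_2$ (with $\lambda_{r_1} = \lambda_{r_2}$), it holds on every $\Z$-linear combination, which is exactly every element of $\Delta$. I do not expect any genuine obstacle here; the statement is, as the text says, an immediate corollary, and the entire content is the observation that the PST condition must hold in particular on eigenvalue-coincidence pairs. One could also note that the hypothesis that $\Gamma(R,S)$ is a gcd-graph is not actually used in this corollary — it is stated for context with the surrounding section — but I would keep the hypothesis as written.

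Thus the proof is essentially two lines: restrict the congruence of \cref{prop:condition} to the pairs defining $\Delta$, and invoke linearity of $\psi$ to upgrade from generators to the group. The main "work", if any, is simply recording that $\psi$ composed with multiplication-by-$s$ is additive, so that vanishing on generators of $\Delta$ forces vanishing on $\Delta$.
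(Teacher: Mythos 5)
Your argument is correct and is exactly the paper's proof: specialize the congruence of \cref{prop:condition} to pairs with $\lambda_{r_1}=\lambda_{r_2}$ to get $\psi(s(r_1-r_2))=0$ in $\Z/n$, then use linearity of $\psi$ (and additivity of $d\mapsto\psi(sd)$) to pass from the generators $r_1-r_2$ to all of $\Delta$. The paper states this in two lines; your version merely spells out the generator-to-group step more explicitly, and your side remark that the gcd-graph hypothesis is not actually used here is also accurate.
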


\begin{proof}
\cref{prop:condition} implies that if there exists perfect state transfer from $0$ to $s$ at time $t$ then $\psi(s(r_1-r_2))=0.$ Since $\psi$ is linear, we conclude that $\psi(sd)=0$ for all $d \in \Delta.$
\end{proof}
Let us now discuss a lower bound for the size of $\Delta.$ By \cite[Corollary 4.17]{nguyen2025gcd}, $\lambda_{r_1}=\lambda_{r_2}$ if $r_1$ and $r_2$ are associates. In particular, $\Delta$ contains the subgroup $\Delta'$ generated by $u_1-u_2$ where $u_1, u_2$ are units in $R.$ Let us now describe the structure of $\Delta'.$ To do so, we recall that by the structure theorem for artinian rings, $R \cong \prod_{i=1}^d R_i$ where each $R_i$ is a local ring. We can further write $R=R_1 \times R_2$ where $R_1$ (respectively, $R_2$) consists of local factors whose residue fields are $\F_2$ (respectively, $\neq \F_2$). Let $J(R_1)$ be the Jacobson radical of $R_1$. Because $R_1$ is a finite product of local rings, $J(R_1)$ is the kernel of the map $R_1 \to R_1/J(R_1) \cong \F_2^r$ where $r$ is the number of local factors whose residue fields are $\F_2.$ By \cite[Corollary 3.5]{nguyen2025gcd}, every element in $J(R_1) \times R_2$ is a difference of two units. This shows that $J(R_1) \times R_2 \subset \Delta'$. In fact, this is an equality. 
\begin{prop} \label{prop:Delta}
$\Delta' = J(R_1) \times R_2$ and hence $J(R_1) \times R_2 \subset \Delta$. Consequently, if there exists PST between $0$ and $s$ then $s \in \Ann_{R}(J(R_1) \times R_2).$
\end{prop}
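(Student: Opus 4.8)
The statement has two parts: first the equality $\Delta' = J(R_1)\times R_2$, and then the consequence that PST between $0$ and $s$ forces $s\in\Ann_R(J(R_1)\times R_2)$. The paragraph preceding the proposition already establishes the inclusion $J(R_1)\times R_2\subseteq\Delta'$ (via \cite[Corollary 3.5]{nguyen2025gcd}), so the only genuine content in the first part is the reverse inclusion $\Delta'\subseteq J(R_1)\times R_2$. My plan is to show that every generator $u_1-u_2$ of $\Delta'$, with $u_1,u_2\in R^\times$, already lies in $J(R_1)\times R_2$; since $J(R_1)\times R_2$ is a subgroup (indeed an ideal) of $R$, this suffices.

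\textbf{Reverse inclusion.} Write $R = R_1\times R_2$ as in the excerpt, and decompose $u_i = (u_i^{(1)}, u_i^{(2)})$ accordingly. The $R_2$-component of $u_1-u_2$ lies in $R_2$ trivially, so I only need to control the $R_1$-component: I must show $u_1^{(1)} - u_2^{(1)} \in J(R_1)$. Reduce modulo $J(R_1)$: since $R_1$ is a product of local rings each with residue field $\F_2$, we have $R_1/J(R_1)\cong\F_2^r$, and a unit of $R_1$ maps to a unit of $\F_2^r$, i.e.\ to the all-ones vector $(1,\dots,1)$ — the only unit of $\F_2^r$. Hence both $u_1^{(1)}$ and $u_2^{(1)}$ reduce to $(1,\dots,1)$, so their difference reduces to $0$, i.e.\ $u_1^{(1)}-u_2^{(1)}\in J(R_1)$. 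This gives $u_1-u_2\in J(R_1)\times R_2$, and therefore $\Delta'\subseteq J(R_1)\times R_2$, completing the equality. I'd note in passing that the very reason $J(R_1)\times R_2$ (and not something smaller) can be realized is exactly that the local factors with residue field $\neq\F_2$ contribute all of $R_2$ as differences of units, while the $\F_2$-factors contribute only $J$.

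\textbf{The consequence.} Combining with the already-proven inclusion $J(R_1)\times R_2\subseteq\Delta'$ gives equality, and since $\Delta'\subseteq\Delta$ by construction (the excerpt observes $\Delta$ contains $\Delta'$ because associates have equal eigenvalues, \cite[Corollary 4.17]{nguyen2025gcd}), we get $J(R_1)\times R_2\subseteq\Delta$. Now invoke \cref{cor:orthogonal}: if there is PST from $0$ to $s$ at some time $t$, then $\psi(sd)=0$ for all $d\in\Delta$, in particular for all $d\in J(R_1)\times R_2$. So $\psi\bigl(s(J(R_1)\times R_2)\bigr)=0$. The final step is to upgrade "$\psi$ kills $s\cdot(J(R_1)\times R_2)$" to "$s\cdot(J(R_1)\times R_2)=0$", i.e.\ $s\in\Ann_R(J(R_1)\times R_2)$. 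This uses that $J(R_1)\times R_2$ is an ideal: for any $d$ in this ideal and any $a\in R$, $ad$ is again in the ideal, so $\psi(s\cdot ad) = \psi((sa)d)=0$; thus $sd$ is annihilated by $\psi$ against every element of $R$, and nondegeneracy of $\psi$ (the defining Frobenius property) forces $sd=0$. Hence $s\in\Ann_R(J(R_1)\times R_2)$.

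\textbf{Main obstacle.} There is no serious obstacle; the proposition is essentially bookkeeping once the two external inputs (\cite[Corollary 3.5]{nguyen2025gcd} for one inclusion, \cite[Corollary 4.17]{nguyen2025gcd} for $\Delta'\subseteq\Delta$) are in hand. The one point that deserves care is the last paragraph: $\psi(sd)=0$ for all $d$ in the ideal is a priori weaker than $sd=0$, and bridging that gap requires explicitly using both that $J(R_1)\times R_2$ is an ideal (so one may multiply $d$ by arbitrary ring elements) and that $\psi$ is nondegenerate. Getting that implication stated cleanly — rather than silently — is the only place where the argument is more than a one-line reduction.
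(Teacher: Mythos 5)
Your proposal is correct and follows essentially the same route as the paper: the reverse inclusion is proved by reducing units of $R_1$ modulo $J(R_1)$ (the paper reduces to a single local factor with residue field $\F_2$, you work with $R_1/J(R_1)\cong\F_2^r$ all at once, which is the same computation), and the consequence is deduced from \cref{cor:orthogonal} plus nondegeneracy of $\psi$. Your explicit justification that $\psi$ vanishing on the ideal $s(J(R_1)\times R_2)$ forces that ideal to be zero is a welcome unpacking of a step the paper states tersely, but it is not a different argument.
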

\begin{proof}
    We already know that $J(R_1) \times R_2 \subset \Delta'.$ Let us also prove that $\Delta' \subset J(R_1) \times R_2$. It is enough to show that if $u_1, u_2 \in R_{1}^{\times}$ then $u_1 - u_2 \in J(R_1).$ For this, it is sufficient to assume that $R_1$ is a local ring with maximal ideal $\mathfrak{m}$ and $|R/\mathfrak{m}|=2.$ In this case, $u_1 \equiv u_2 \equiv 1 \pmod{\mathfrak{m}}$ and hence $u_1- u_2 \in \mathfrak{m}=J(R_1).$

    The last statement follows from \cref{cor:orthogonal} that $\psi(sd)=0$ for all $d \in J(R_1) \times R_2.$ This implies that $s (J(R_1) \times R_2)$ belongs to the kernel of $\psi.$ Since $\psi$ is non-degenerate, we must have $s (J(R_1) \times R_2) = 0$. In other words, $s \in \Ann_{R}(J(R_1) \times R_2).$
\end{proof}

Let us now focus on the structure of a local Frobenius ring. By \cite{honold2001characterization, lamprecht1953allgemeine}, a local ring $(S,\mathfrak{m})$ is Frobenius if and only if there exists an element $e \in \mathfrak{m}$ such that $Se$ is a minimal ideal in $S$; namely $Se$ is contained in every non-zero ideal in $S$. Said it differently, $Se$ is precisely the socle module of $S$ (see \cref{def:soc} for the definition of the socle of a module). It turns out that if the residue field of $S$ is $\F_2$, $e$ must be unique. 

\begin{lem} \label{lem:local}
    Let $(S, \mathfrak{m})$ be a local finite Frobenius ring such that the residue field of $S$ is $\F_2.$ Then, there exists a unique $e \in S$ such that $\Ann_{R}(e)=\mathfrak{m}$. Furthermore, if $\psi\colon S \to \Z/n$ is any non-degenerate linear functional then $\dfrac{\psi(e)}{n} \equiv \dfrac{1}{2} \pmod{1}.$ 
\end{lem}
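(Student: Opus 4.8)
The plan is to establish three things in order: the existence of $e$, its uniqueness, and the value of $\psi(e)/n$. For existence, I would invoke the Frobenius characterization of local rings recalled just above the lemma: there is some $e\in\mathfrak{m}$ with $Se$ a minimal (hence the unique minimal) nonzero ideal of $S$. Since $Se$ is one-dimensional over the residue field $\F_2$, we have $Se=\{0,e\}$, so $Se$ has exactly two elements and $2e=0$. The annihilator $\Ann_S(e)$ is a proper ideal (as $e\neq 0$), hence contained in $\mathfrak{m}$; conversely $\mathfrak{m}e\subseteq Se$ is a proper $S$-submodule of the two-element group $Se$, so $\mathfrak{m}e=0$, giving $\mathfrak{m}\subseteq\Ann_S(e)$ and therefore $\Ann_S(e)=\mathfrak{m}$.

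For uniqueness, suppose $e'$ also satisfies $\Ann_S(e')=\mathfrak{m}$. Then $Se'$ is a nonzero ideal annihilated by $\mathfrak{m}$, so it is contained in the socle; but the socle of a local Frobenius ring is the minimal ideal $Se$, which is simple, so $Se'=Se$. Writing $e'=ue$ with $u\in S$: if $u\in\mathfrak{m}$ then $e'\in\mathfrak{m}e=0$, contradiction, so $u$ is a unit. Since the residue field is $\F_2$, $u\equiv 1\pmod{\mathfrak{m}}$, i.e. $u=1+m$ with $m\in\mathfrak{m}$, and then $e'=ue=e+me=e+0=e$. This is where the hypothesis $S/\mathfrak{m}=\F_2$ is essential — over a larger residue field the generator of the socle is only unique up to a unit scalar.

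For the functional value, note $2e=0$ forces $2\psi(e)\equiv\psi(2e)=0\pmod n$, so $n$ is even and $\psi(e)/n\in\{0,1/2\}\pmod 1$. It remains to rule out $\psi(e)\equiv 0\pmod n$. If $\psi(e)=0$ in $\Z/n$, then for every $s\in S$ we would have $\psi(se)=\psi(s\cdot e)$; but I want to show $\psi$ vanishes on the whole ideal $Se=\{0,e\}$, which is immediate from $\psi(e)=0$ and $\psi(0)=0$. Thus $Se\subseteq\ker\psi$, contradicting the non-degeneracy of $\psi$ (a nonzero ideal cannot lie in the kernel of a non-degenerate functional, since non-degeneracy means the pairing $(x,y)\mapsto\psi(xy)$ has trivial radical, so no nonzero element — in particular no $e$ — can pair to zero with all of $S$, and pairing $e$ with $1$ already gives $\psi(e)=0$). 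Hence $\psi(e)\not\equiv 0$, leaving $\psi(e)/n\equiv 1/2\pmod 1$.

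The only genuinely delicate point is making the uniqueness argument airtight: one must be careful that "$Se'$ is annihilated by $\mathfrak{m}$" really does place it inside the socle and that the socle of a local Frobenius ring is exactly the simple module $Se$ — both facts are part of the structure theory cited before the lemma, so they may be quoted rather than reproved. Everything else is a short computation using $2e=0$ and $S/\mathfrak{m}=\F_2$.
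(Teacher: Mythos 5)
Your proposal is correct and follows essentially the same route as the paper: uniqueness via $e'=ue$ with $u-1\in\mathfrak{m}$ killed by $\Ann_S(e)=\mathfrak{m}$, and the value $\psi(e)\equiv n/2$ from $2\psi(e)=\psi(2e)=0$ together with non-degeneracy. You merely spell out a few steps the paper leaves implicit (that $\Ann_S(e')=\mathfrak{m}$ forces $Se'=Se$ via the socle, and why non-degeneracy rules out $\psi(e)=0$), which is fine.
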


\begin{proof}
    Let $e'$ be another element such that $Se=Se'.$ Then $e'=ue$ for some $u \in S^{\times}.$ Since the residue field of $S$ is $\F_2$, $u-1 \in \mathfrak{m}.$ Furthermore, since $\Ann_{S}(e)=\mathfrak{m}$, we conclude that $(u-1)e=0$. This implies that $e'=e.$

    For the second statement, we know that $\psi(e) \neq 0$ since $\psi$ is non-degenerate. Additionally, $0=\psi(2e)=2 \psi(e).$ This shows that $\psi(e) \equiv \frac{n}{2} \pmod{n}.$ Equivalently, $\frac{\psi(e)}{n} \equiv \frac{1}{2} \pmod{1}.$ 
\end{proof}

Combining \cref{prop:Delta} and \cref{lem:local} we have the following. 

\begin{thm} \label{prop:upper_bound}
Let $R$ be a finite Frobenius ring. Suppose that $R$ has the following Artin-Wedderburn decomposition: $R = (\prod_{i=1}^d S_i) \times R_2$. Here, $(S_i, \mathfrak{m_i})$ represents all local factors of $R$ whose residue fields are $\mathbb{F}_2$. For each $1 \leq i \leq d$, let $e_i$ be the unique minimal element of $S_i$ as defined in \cref{lem:local}. If there exists PST between $0$ and some $s \in R$, then $s$ must be of the form $(a_1, a_2, \ldots, a_d, 0)$, where each $a_i$ is $0$ or $e_i$. In particular, if $R$ is a local ring, then $s=e$, where $e$ is the minimal element of $R$. 
\end{thm}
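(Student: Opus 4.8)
The strategy is to combine the two preceding results — Proposition \ref{prop:Delta} and Lemma \ref{lem:local} — and then unwind the annihilator that appears in the former. By Proposition \ref{prop:Delta}, if PST occurs between $0$ and $s$, then $s \in \Ann_R(J(R_1) \times R_2)$, where $R_1$ here denotes the product of the local factors with residue field $\F_2$; in the notation of the present theorem, $R_1 = \prod_{i=1}^d S_i$. So the first step is purely ring-theoretic: compute $\Ann_R(J(R_1) \times R_2)$ explicitly. Since $R = (\prod_{i=1}^d S_i) \times R_2$ and annihilators of a product ideal in a product ring factor componentwise, I would write $\Ann_R(J(R_1) \times R_2) = \left(\prod_{i=1}^d \Ann_{S_i}(\mathfrak{m}_i)\right) \times \Ann_{R_2}(R_2)$. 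The last factor is $\Ann_{R_2}(R_2) = 0$, which forces the last coordinate of $s$ to be $0$. For each $i$, $\Ann_{S_i}(\mathfrak{m}_i)$ is, by the characterization of local Frobenius rings recalled just before Lemma \ref{lem:local} together with Lemma \ref{lem:local} itself, exactly the socle $S_i e_i$, which — because the residue field is $\F_2$ — is the two-element set $\{0, e_i\}$ (as $S_i e_i \cong S_i/\Ann_{S_i}(e_i) = S_i/\mathfrak{m}_i = \F_2$). This gives precisely the claimed form $s = (a_1, \dots, a_d, 0)$ with each $a_i \in \{0, e_i\}$.

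The second step is to justify the two facts used above in slightly more detail. That $\Ann_{S_i}(\mathfrak{m}_i) = S_i e_i$: the inclusion $S_i e_i \subseteq \Ann_{S_i}(\mathfrak{m}_i)$ holds because $\mathfrak{m}_i \cdot e_i = 0$ by the defining property of $e_i$ in Lemma \ref{lem:local} ($\Ann_{S_i}(e_i) = \mathfrak{m}_i$ gives $\mathfrak{m}_i e_i = 0$), and $S_i e_i$ is generated by $e_i$; conversely $\Ann_{S_i}(\mathfrak{m}_i)$ is an ideal killed by $\mathfrak{m}_i$, hence every element $x$ in it satisfies $\mathfrak{m}_i x = 0$, so $S_i x$ is either $0$ or a minimal ideal, and in the latter case equals the unique minimal ideal $S_i e_i$; either way $x \in S_i e_i$. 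Then $|S_i e_i| = |S_i/\mathfrak{m}_i| = 2$, so $S_i e_i = \{0, e_i\}$.

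Finally, the specialization to local $R$: if $R$ itself is local with residue field $\F_2$, then $d = 1$, $R_2 = 0$, and $s \in \{0, e\}$; since PST requires $s \neq 0$ (the two vertices must be distinct), $s = e$. If $R$ is local with residue field $\neq \F_2$, then $d = 0$ and $R = R_2$, so $\Ann_R(R) = 0$ forces $s = 0$, contradicting distinctness — so there is no PST, and the statement "$s = e$" holds vacuously in the sense that the hypothesis is never met; I would either note this or tacitly assume the residue field is $\F_2$ in the local case, matching the phrasing of the theorem. I do not anticipate a genuine obstacle here: the whole argument is bookkeeping on top of Proposition \ref{prop:Delta} and Lemma \ref{lem:local}. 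The only point requiring minor care is making the componentwise factorization of the annihilator rigorous and correctly identifying, in the $\F_2$ case, that the socle has exactly two elements; everything else is formal.
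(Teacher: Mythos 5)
Your proposal is correct and follows essentially the same route as the paper: apply \cref{prop:Delta} to place $s$ in $\Ann_R(J(R_1)\times R_2)$, factor the annihilator componentwise to get $a_i\in\Ann_{S_i}(\mathfrak{m}_i)=S_ie_i$ and the last coordinate in $\Ann_{R_2}(R_2)=0$, and invoke \cref{lem:local} to conclude $a_i\in\{0,e_i\}$. The extra details you supply (why $\Ann_{S_i}(\mathfrak{m}_i)=S_ie_i$ has exactly two elements, and the distinctness argument in the local case) are accurate elaborations of steps the paper leaves implicit.
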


\begin{proof}
    Suppose that there exists PST between $0$ and $s=(a_1, a_2, \ldots, a_d, a) \in R.$ Then by \cref{prop:Delta}, $a_i \in \Ann_{S_i}(m_i) = S_ie_i$ for each $1 \leq i \leq d.$ By \cref{lem:local}, this implies that $a_i \in \{0, e_i \}$. Additionally, $a \in \Ann_{R_2}(R_2)=0.$
\end{proof}

We discuss some corollaries of \cref{prop:upper_bound}.
\begin{cor} \label{cor:number_of_pst}
Let $t$ be a fixed time. Then, there is at most one value of $s \in R$ such that there is perfect state transfer from $0$ to $s$ at time $t$. 
\end{cor}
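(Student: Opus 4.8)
The plan is to play off the criterion of \cref{prop:condition} against itself for two putative target vertices that are reached at the \emph{same} time $t$. The key observation is that in the condition
\[
(\lambda_{r_1}-\lambda_{r_2})\frac{t}{2\pi}+\frac{\psi(s(r_1-r_2))}{n}\equiv 0 \pmod 1,
\]
the eigenvalue contribution $(\lambda_{r_1}-\lambda_{r_2})\frac{t}{2\pi}$ depends only on $r_1,r_2$ and $t$, and not on the target vertex $s$. So if PST from $0$ to $s$ and PST from $0$ to $s'$ both occur at time $t$, subtracting the two instances of the criterion will make the $\lambda$-terms cancel.

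Concretely, I would argue as follows. Assume there is PST from $0$ to $s$ at time $t$ and also from $0$ to $s'$ at time $t$. Applying \cref{prop:condition} twice and subtracting, for all $r_1,r_2\in R$ one gets
\[
\frac{\psi\big((s-s')(r_1-r_2)\big)}{n}\equiv 0 \pmod 1,
\]
using that $\psi$ is additive. Since $\{r_1-r_2 : r_1,r_2\in R\}=R$ (take $r_2=0$), this says $\psi\big((s-s')r\big)=0$ for every $r\in R$, i.e. the right ideal $(s-s')R$ is contained in $\ker\psi$. Non-degeneracy of $\psi$ — exactly as invoked in the proof of \cref{prop:Delta} — then forces $(s-s')R=0$, and since $1\in R$ this gives $s=s'$. (Equivalently, one can phrase the last step via the parametrization of characters of $R$ recalled in \cref{sec:spectral}: the additive character $x\mapsto\zeta_n^{\psi((s-s')x)}$ is trivial, hence $s-s'=0$ by uniqueness of the parameter.) Thus there is at most one $s$ with PST from $0$ to $s$ at time $t$.

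I do not expect a genuine obstacle here: the only point requiring a moment of care is the legitimacy of the subtraction, which rests on the target-independence of the eigenvalue part of the criterion; after that the conclusion is a one-line application of the non-degeneracy of $\psi$ already used in this section. If desired, I would add a remark that this recovers, in a uniform way, the corresponding uniqueness statements for gcd-graphs over $\Z$ and for cubelike graphs.
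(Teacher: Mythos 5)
Your proof is correct. The opening move is the same as the paper's: apply \cref{prop:condition} to both targets at the common time $t$ and subtract, so that the eigenvalue terms $(\lambda_{r_1}-\lambda_{r_2})\frac{t}{2\pi}$ cancel and one is left with $\psi\bigl((s-s')(r_1-r_2)\bigr)\equiv 0$ for all $r_1,r_2$. Where you diverge is in how you finish. The paper does not invoke non-degeneracy at this point; instead it first appeals to \cref{prop:upper_bound} to pin down the shape of the two targets as tuples $(a_1,\ldots,a_d,0)$ with $a_i\in\{0,e_i\}$, assumes they differ in (say) the first coordinate, and then exhibits the explicit pair $r_1=(0,\ldots,0)$, $r_2=(1,0,\ldots,0)$ for which $\psi((r_2-r_1)s_1)=\frac{n}{2}\neq 0=\psi((r_2-r_1)s_2)$ by \cref{lem:local}, a contradiction. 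Your route instead sets $r_2=0$ to get $\psi((s-s')r)=0$ for all $r\in R$, so the ideal $(s-s')R$ lies in $\ker\psi$, and non-degeneracy of $\psi$ (exactly as used in the proof of \cref{prop:Delta}) forces $s=s'$. Your argument is the more economical and more general one: it needs neither the Artin--Wedderburn decomposition nor \cref{lem:local}, and it would apply verbatim to any Cayley graph $\Gamma(R,S)$ covered by \cref{prop:condition}, not just gcd-graphs. What the paper's version buys is consistency with the surrounding narrative: having just proved \cref{prop:upper_bound}, it reuses that classification and makes the obstruction concrete by naming the character value $\frac{n}{2}$ that separates the two candidates. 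Both are sound; yours could reasonably be preferred as a standalone proof.
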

\begin{proof}
Let $s_1, s_2 \in R$ such that PST exists both between $0$ and $s_1$ and between $0$ and $s_2$ at time $t.$ Then \cref{prop:condition} implies that $\psi((r_1-r_2)s_1) =\psi((r_1-r_2)s_2)$ for all $r_1, r_2 \in R$ since $\lambda_{r_1}, \lambda_{r_2}$ are independent of $r_1, r_2.$ Suppose that $s_1 \neq s_2.$ Without loss of generality, suppose that the first components of $s_1, s_2$ are different, namely $s_1 =(e_1, a_2, \ldots, a_d, 0)$ and $s_2 = (0, a_2', \ldots, a_d', 0)$ as explained in \cref{prop:upper_bound}. Let $r_2=(1, 0, \ldots, 0)$ and $r_1 = (0, 0, \ldots, 0)$. We then have $
\psi((r_2-r_1)s_1) = \frac{n}{2}$ by \cref{lem:local} while $\psi((r_2-r_1)s_2) =0$. This is a contradiction. 

\end{proof}

\begin{rem}
    For each $s \neq 0$ of the form $(a_1, a_2, \ldots, a_d,0) \in  R$ where $a_i \in \{0, e_i \}$, there exists a gcd-graph $G_{R}(D)$ such that PST exists between $0$ and $s$ at time $t= \frac{\pi}{2}.$ In fact, we can take $D=\{Rs\}$. In this case, $G_{R}(D)$ consists of a disjoint union of several copies of $K_2$, with one of these being the induced graph on the vertices $0$ and $s$. Consequently, PST exists between $0$ and $s$ at time $\frac{\pi}{2}$.
\end{rem}

Here is another corollary of \cref{prop:upper_bound}.

\begin{cor}
\label{cor:no PST}
If $d=0$ then there is no perfect state transfer on $G_{R}(D).$ In particular, if $R$ is an $\F_q$-algebra with $q \neq 2$, then there is no perfect state transfer on any gcd-graphs defined over $R.$
\end{cor}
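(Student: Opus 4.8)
The plan is to derive both assertions directly from \cref{prop:upper_bound}. Recall that the hypothesis ``$r=0$'' means that $R$ has no local factor whose residue field is $\F_2$; in the notation of \cref{prop:upper_bound} this is precisely the case $d=0$, so that $R=R_2$ and the ``$\F_2$-part'' of $R$ is trivial.

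For the first assertion I would argue by contradiction. Suppose $G_R(D)$ admits PST between two distinct vertices $a$ and $b$. By the translation symmetry observed just before \cref{prop:condition}, this is equivalent to PST between $0$ and $s := b-a$, with $s \neq 0$. Then \cref{prop:upper_bound} forces $s$ to have the shape $(a_1,\dots,a_d,0)$ with each $a_i \in \{0,e_i\}$; when $d=0$ the only element of this shape is $s=0$, contradicting $s\neq 0$. Hence $G_R(D)$ has no PST.

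For the ``in particular'' clause I would check that $r=0$ holds automatically when $R$ is an $\F_q$-algebra with $q\neq 2$. Write $R\cong\prod_i S_i$ with each $(S_i,\mathfrak{m}_i)$ local. The composite $\F_q\to R\to S_i/\mathfrak{m}_i$ is a nonzero ring homomorphism out of a field, hence injective, so the residue field $S_i/\mathfrak{m}_i$ contains $\F_q$ and therefore has at least $q$ elements. Since $q$ is a prime power different from $2$, we have $q\ge 3$, so no $S_i$ has residue field $\F_2$; that is, $r=0$, and the first assertion applies.

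I do not expect a real obstacle here: the corollary is essentially bookkeeping on top of \cref{prop:upper_bound}. The only point that needs an actual (short) argument is that a finite local $\F_q$-algebra has residue field containing $\F_q$, together with the elementary fact that a prime power $q\neq 2$ is at least $3$; the one thing to be mildly careful about is reconciling the symbol ``$r$'' in the statement with the index ``$d$'' of \cref{prop:upper_bound}, which I would do in the opening line of the write-up.
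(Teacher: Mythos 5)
Your proposal is correct and matches the paper's intent exactly: the paper gives no written proof, presenting the statement as an immediate consequence of \cref{prop:upper_bound}, and your argument (translating PST between $a$ and $b$ to PST between $0$ and $s=b-a\neq 0$, noting that $r=0$ forces $s=0$, and checking that an $\F_q$-algebra with $q\neq 2$ has no local factor with residue field $\F_2$ since each residue field contains $\F_q$ with $q\ge 3$) is precisely the bookkeeping the authors leave implicit. Your care in identifying the symbol $r$ of the corollary with the index $d$ of \cref{prop:upper_bound} is a sensible clarification, not a deviation.
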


\section{\textbf{PST on gcd-graphs over a local Frobenius ring}} \label{sec:local_rings}

Let $(R, \mathfrak{m})$ be a finite local Frobenius ring where $\mathfrak{m}$ is the maximal ideal of $R$.  In this section, we study PST on gcd-graphs defined over $R$ that have PST. We remark that since $R$ is Frobenius, it has the following elegant symmetry: for each ideal $I$ in $R$, $\Ann_{R}(\Ann_{R}(I))=I$ (see \cite{honold2001characterization}). As a consequence, an ideal is uniquely determined by its annihilator ideal. 

As explained in \cref{lem:local}, since $R$ is a Frobenius ring, there exists a unique $e \in R$ such that $Re$ is the unique minimal ideal in $R$. By \cref{cor:no PST}, a gcd-graph over $R$ has PST only if the residue field $R/\mathfrak{m}$ is $\F_2.$ For this reason, we will assume this property throughout this section. 

As explained in \cref{cor:number_of_pst}, PST can only exist between $0$ and $e.$ Furthermore, when $R$ is a local ring, \cref{prop:condition} can be simplified as follows. 

\begin{prop} \label{prop:simplified_condition}
    There exists PST between $0$ and $e$ at time $t$ if and only if for all $r \in \mathfrak{m}$ the following conditions hold. 
    \begin{enumerate}
        \item $(\lambda_r - \lambda_0) \frac{t}{2 \pi} \in \Z$. 
        \item $(\lambda_r - \lambda_1) \frac{t}{2 \pi} + \frac{1}{2}  \in \Z.$
    \end{enumerate}
\end{prop}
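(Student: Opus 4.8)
The plan is to apply \cref{prop:condition} with $s = e$ and then exploit the special structure of $R$ as a local ring with residue field $\F_2$. Recall that \cref{prop:condition} asserts PST from $0$ to $e$ at time $t$ iff for all $r_1, r_2 \in R$ we have $(\lambda_{r_1} - \lambda_{r_2})\frac{t}{2\pi} + \frac{\psi(e(r_1-r_2))}{n} \equiv 0 \pmod 1$. The first observation is that, because $R/\mathfrak{m} = \F_2$, every element of $R$ is either a unit or lies in $\mathfrak{m}$, and every unit $u$ satisfies $u \equiv 1 \pmod{\mathfrak{m}}$. Since $\Ann_R(e) = \mathfrak{m}$ (by \cref{lem:local}), for a unit $u$ we have $ue = e$; hence $\psi(e \cdot u) = \psi(e)$ for all units $u$, and moreover $\lambda_u = \lambda_1$ for all units $u$ by \cite[Corollary 4.17]{nguyen2025gcd} (associates give equal eigenvalues). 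Thus the condition, as $r_1, r_2$ range over $R = R^\times \sqcup \mathfrak{m}$, collapses: it only involves the two ``eigenvalue classes'' represented by $1$ and by elements of $\mathfrak{m}$, together with the values $\psi(e \cdot r)$ for $r \in \mathfrak{m}$ and $r = $ a unit.

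Next I would enumerate the cases for the pair $(r_1, r_2)$. If both $r_1, r_2$ are units, then $\lambda_{r_1} = \lambda_{r_2} = \lambda_1$ and $\psi(e(r_1 - r_2)) = \psi(e)\big(\text{after using } ue = e\big) - \psi(e) $; more carefully, $r_1 - r_2 \in \mathfrak{m}$ since both are $\equiv 1 \pmod{\mathfrak{m}}$, so $e(r_1 - r_2) = 0$ and the condition is $0 \equiv 0$, vacuous. If both $r_1, r_2 \in \mathfrak{m}$, write the condition for the pair $(r_1, r_2)$ as the difference of the conditions for $(r_1, 0)$ and $(r_2, 0)$; so it suffices to impose the condition for pairs $(r, 0)$ with $r \in \mathfrak{m}$, which reads $(\lambda_r - \lambda_0)\frac{t}{2\pi} + \frac{\psi(er)}{n} \equiv 0$, and since $r \in \mathfrak{m} = \Ann_R(e)$ we get $er = 0$, giving exactly condition (1). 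Finally, the mixed case: take $r_1 = r \in \mathfrak{m}$ and $r_2 = 1$ (a unit). Then $\lambda_{r_1} = \lambda_r$, $\lambda_{r_2} = \lambda_1$, and $\psi(e(r - 1)) = \psi(er) - \psi(e) = -\psi(e)$, and by \cref{lem:local} we have $\frac{\psi(e)}{n} \equiv \frac12 \pmod 1$, so $\frac{-\psi(e)}{n} \equiv \frac12 \pmod 1$; the condition becomes $(\lambda_r - \lambda_1)\frac{t}{2\pi} + \frac12 \equiv 0 \pmod 1$, which is condition (2). Any mixed pair $(u, r)$ with $u$ a unit reduces to this one since $\lambda_u = \lambda_1$ and $\psi(eu) = \psi(e)$. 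This establishes that the conditions of \cref{prop:condition} are equivalent to (1) and (2) holding for all $r \in \mathfrak{m}$, completing both directions.

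I expect the only slightly delicate point is bookkeeping the reduction of the ``all pairs $(r_1, r_2)$'' quantifier to the three model cases, and in particular making sure the $\pmod 1$ arithmetic with $\frac{\psi(e)}{n} \equiv \frac12$ is handled consistently (note $-\frac12 \equiv \frac12 \pmod 1$, so the sign in condition (2) is immaterial). There is no real obstacle: everything is driven by $ue = e$ for units $u$, by $\mathfrak{m} = \Ann_R(e)$, and by the fact that associates have equal eigenvalues, all of which are already available. One should also remark that $\lambda_r$ is real (the graph is undirected), which is implicit in writing these congruences, but this plays no role in the equivalence itself.
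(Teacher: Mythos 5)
Your proposal is correct and follows essentially the same route as the paper's own proof: specialize \cref{prop:condition} to $s=e$, use $\Ann_R(e)=\mathfrak{m}$, $\lambda_u=\lambda_1$ for units, and $\frac{\psi(e)}{n}\equiv\frac12$, then reduce the quantifier over all pairs $(r_1,r_2)$ to the three cases (both units, both in $\mathfrak{m}$, mixed) exactly as the paper does. No gaps; the bookkeeping you flag as delicate is handled the same way in the paper.
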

\begin{proof}
 Suppose that there exists PST between $0$ and $e$ at time $t$, then by \cref{prop:condition}, for all $r,u\in R$, we have 
 \[ (\lambda_{r}-\lambda_{u}) \frac{t}{2 \pi} + \frac{\psi(e(r-u))}{n} \equiv 0 \pmod{1}.\]
In particular, if $r\in \mathfrak{m}$ then $e(r-0)=er=0$ and $e(r-1)=-e$ since $\Ann_{R}(e)=\mathfrak{m}$, and thus
\[(\lambda_{r}-\lambda_{0}) \frac{t}{2 \pi} + \frac{\psi(e(r-0))}{n}=(\lambda_{r}-\lambda_{0}) \frac{t}{2 \pi}\equiv 0 \pmod{1},\]
and 
\[
(\lambda_{r}-\lambda_{1}) \frac{t}{2 \pi} + \frac{\psi(e(r-1))}{n}=(\lambda_{r}-\lambda_{1}) \frac{t}{2 \pi}+\dfrac{1}{2} \equiv 0 \pmod{1}.
\]
Hence, the conditions (1) and (2) hold.

Conversely, suppose that the conditions (1) and (2) hold. We need to show that for all $r,u\in R$, 
 \[ (\lambda_{r}-\lambda_{u}) \frac{t}{2 \pi} + \frac{\psi(e(r-u))}{n} \equiv 0 \pmod{1}.\tag{*}\]
If $r$ and $u$ are not in $\mathfrak{m}$ then $r$ and $u$ are in $R^\times$ and hence $\lambda_r=\lambda_u=\lambda_1$. Also, since $R/\mathfrak{m}=\F_2$, $r-u\in \mathfrak{m}$ and $e(r-u)=0$. Thus the condition (*) is clearly true.
If $r$ and $u$ are in  $\mathfrak{m}$ then $e(r-u)=0$ and
\[
(\lambda_{r}-\lambda_{u}) \frac{t}{2 \pi} + \frac{\psi(e(r-u))}{n}=(\lambda_{r}-\lambda_{0}) \frac{t}{2 \pi} -(\lambda_{u}-\lambda_{0}) \frac{t}{2 \pi}  \equiv 0\pmod{1}.
\]
Hence the condition (*) is also true in this case.
The remaining case to consider is that exactly one of $r$ or $u$ is in $\mathfrak{m}$. We may assume that $r\in \mathfrak{m}$ and $u\not\in\mathfrak{m}$. In this case $u\in R^\times$ and hence $\lambda_u=\lambda_1$. We can also write $u=1+r'$, for some $r'\in \mathfrak{m}$. Thus $e(r-u)=e(r-r')-e=-e$. Hence 
\[
(\lambda_{r}-\lambda_{u}) \frac{t}{2 \pi} + \frac{\psi(e(r-u))}{n} =(\lambda_{r}-\lambda_{1}) \frac{t}{2 \pi} - \frac{\psi(e)}{n}=(\lambda_{r}-\lambda_{1}) \frac{t}{2 \pi}-\dfrac{1}{2}\equiv 0\pmod{1}.
\]
 The condition (*) is also true in this case.
\end{proof}

We will start our investigation with two concrete examples that will serve as a guide for what follows (see \cref{fig:combined} for their graphical representations).

\begin{figure}[ht]
    \centering
    \begin{subfigure}[t]{0.45\textwidth}
        \centering
        \includegraphics[width=\linewidth]{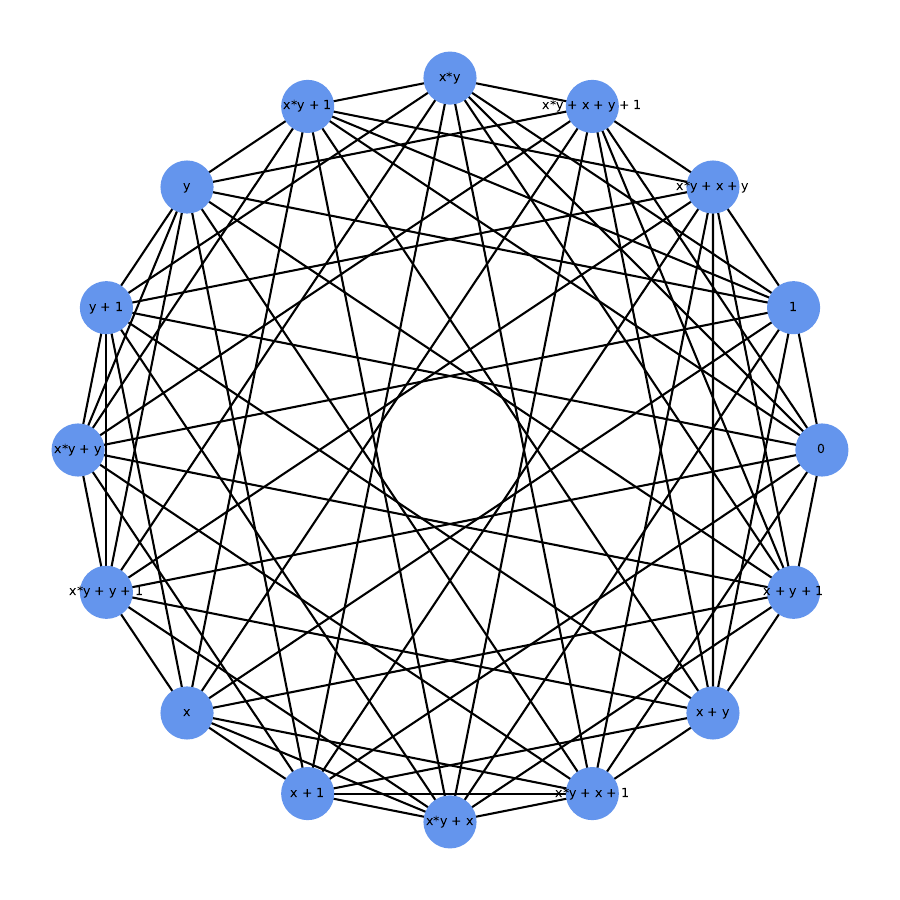} 
        \caption{The gcd-graph $G_{R}(D)$ with $D=\{R, Rxy \}$}
        \label{fig:figure1}
    \end{subfigure}
    \hfill
    \begin{subfigure}[t]{0.45\textwidth}
        \centering
        \includegraphics[width=\linewidth]{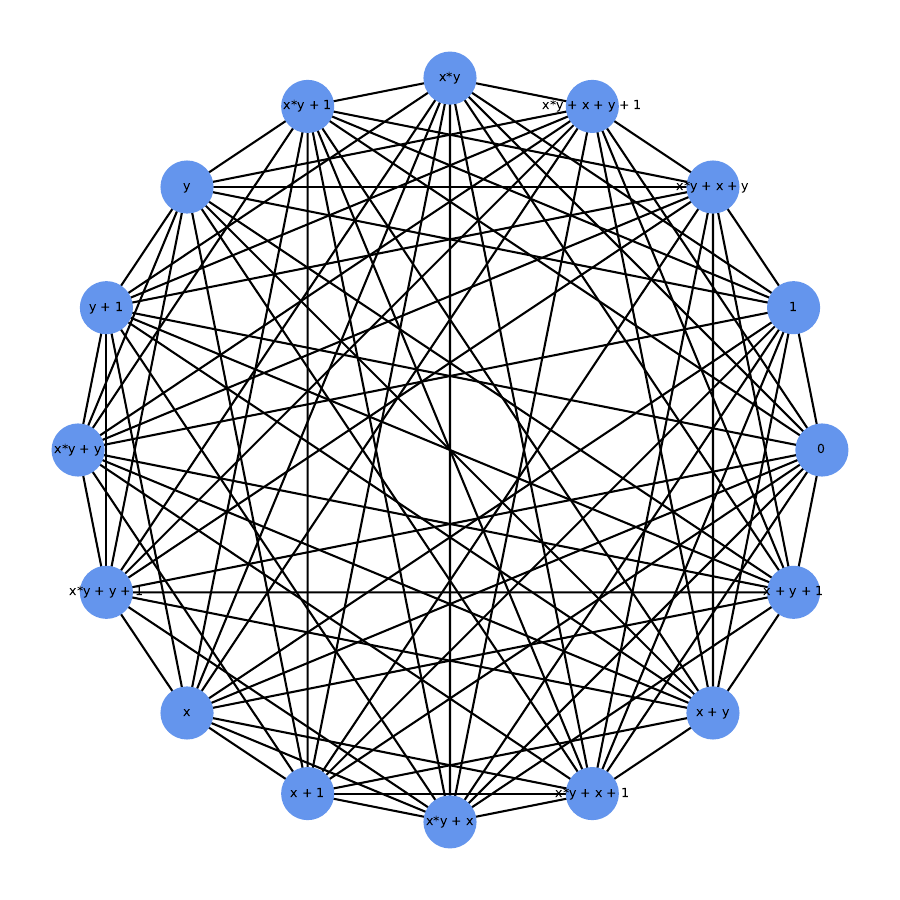} 
        \caption{The gcd-graph $G_{R}(D)$ with $D=\{R, Rx, Rxy \}$}
        \label{fig:figure2}
    \end{subfigure}
    \caption{Two gcd-graphs over $\F_2[x,y]/(x^2,y^2)$}
    \label{fig:combined}
\end{figure}

\begin{expl} \label{expl:first}
Let $R=\F_2[x,y]/(x^2,y^2).$ Then $R$ is a Frobenius ring with $xy$ as the minimal element. Let $D=\{R, Rxy\}$. The characteristic polynomial of $G_{R}(D)$ is 
\[ (t - 9) (t + 7) (t - 1)^6  (t + 1)^8. \]
\cref{tab:dictionary1} records the value of $\lambda_r$ for $r \in R.$
\begin{table}[h]
    \centering
    \begin{tabular}{|c|l|}
        \hline
        \textbf{$\lambda_r$} & $r$ \\
        \hline
        9 & 0 \\
        \hline
        -1 & 1, $xy + x + 1$, $xy + x + y + 1$ \\
        & $x + y + 1$, $x + 1$, $xy + 1$, $xy + y + 1$, $y + 1$ \\
        \hline
        1 & $xy + x + y$, $xy + x$, $x + y$, $x$, $xy + y$, $y$ \\
        \hline
        -7 & $xy$ \\
        \hline
    \end{tabular}
    \caption{Spectrum of $G_{R}(D)$}
    \label{tab:dictionary1}
\end{table}
We can observe that $\lambda_{u}=-1$ for $u \in R^{\times}.$ On the other hand, $\lambda_u$ is of the form $4k+1$ if $u \in \mathfrak{m}.$ By \cref{prop:simplified_condition}, there exists PST between $0$ and $xy$ at time $\frac{\pi}{2}.$
\end{expl}

\begin{expl} \label{expl:second}
Let us consider another example. Let $R=\F_2[x,y]/(x^2, y^2)$ and $D=\{R, Rx, Rxy\}.$ In this case, the characteristic polynomial for $G_{R}(D)$ is 
\[ (t - 11) (t + 5)  (t - 3)^2 (t + 1)^{12} .\] 
The eigenvalues of $G_R(D)$ are described by \cref{tab:dictionary2}. 

\begin{table}[h]
    \centering
    \begin{tabular}{|c|l|}
        \hline
        \textbf{$\lambda_r$} & $r$ \\
        \hline
        11 & 0 \\
        \hline
        -1 & 1, $xy + x + y$, $xy + x + 1$ \\
        & $xy + x + y + 1$, $x + y$, $x + y + 1$, $x + 1$, $xy + y$ \\
        & $xy + 1$, $xy + y + 1$, $y$, $y + 1$ \\
        \hline
        3 & $xy + x$, $x$ \\
        \hline
        -5 & $xy$ \\
        \hline
    \end{tabular}
    \caption{Spectrum of $G_{R}(D)$}
    \label{tab:dictionary2}
\end{table}
As we can see, in this case $\lambda_{1} = \lambda_{xy+x+y}$ and hence the second condition in \cref{prop:simplified_condition} cannot happen. As a result, there is no PST on $G_{R}(D).$
\end{expl}

As we will see later, \cref{expl:second} represents a typical situation where the existence of  $r \in \mathfrak{m}$ such that $\lambda_r = \lambda_1$ is one of the main obstructions to the existence of PST on $G_{R}(D)$. It turns out that the socle series of $R$ will play a fundamental role in our study of this phenomenon. For this reason, we first recall its definition. 
\begin{definition} \label{def:soc}
    Let $M$ be an $R$-module. The socle modules $\text{soc}^{i}(M)$ are defined inductively as follows. 
    \begin{enumerate}
        \item $\text{soc}^{1}(M):=\text{soc}(M)$ is the sum of all minimal non-zero submodules of $M.$ By definition 
        \[ \soc^{1}(M) = \{a \in M \mid xa = 0 \quad \forall x \in \mathfrak{m} \} .\]
        \item For each $i \geq 2$, $\text{soc}^{i}(M)$ is the preimage of $\text{soc}^1(M/\text{soc}^{i-1}(M))$ in $M.$ 
    \end{enumerate}
\end{definition}

When $M=R$, we know that $\soc^1(R)=Re=\{0,e\}.$ For our investigation, $\soc^2(R)$ will be important. Since every ideal in $R$ contains $Re$, a minimal ideal $I$ containing $Re$ is necessarily principal; namely $I=Ra$ for some $a \in \mathfrak{m}$ such that $a \neq e.$ We remark that in this context, \textit{minimal} refers to a minimal ideal in the family of ideals that contain $Re$. In other words, $I$ is a pre-image of a minimal ideal in the quotient ring $R/Re$.

We provide below the necessary and sufficient condition for $Ra$ to be a minimal ideal containing $Re.$ 

\begin{lem} \label{lem:secondary_minimal}
\noindent
Let $a \neq e$ be an element of $R$. The following statements are equivalent. 
\begin{enumerate}
\item $Ra$ is a minimal ideal containing $Re.$
    \item  $xa \in Re$ for all $x \in \mathfrak{m}.$
    \item $a \in \Ann_{R}(\mathfrak{m^2}).$
    \item $a \in \soc^2(R).$
    \item $|R/\Ann_{R}(a)| = 4.$ 
\end{enumerate}
\end{lem}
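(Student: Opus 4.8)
The plan is to prove the chain of equivalences $(1)\Leftrightarrow(2)\Leftrightarrow(3)\Leftrightarrow(4)\Leftrightarrow(5)$ by closing a short cycle, using the Frobenius double-annihilator identity $\Ann_R(\Ann_R(I))=I$ repeatedly. First I would handle $(2)\Leftrightarrow(3)$, which is essentially a reformulation: the condition ``$xa\in Re$ for all $x\in\mathfrak m$'' says exactly that $\mathfrak m a\subseteq Re$. Since $Re=\soc^1(R)=\Ann_R(\mathfrak m)$, the inclusion $\mathfrak m a\subseteq\Ann_R(\mathfrak m)$ is equivalent to $\mathfrak m\cdot\mathfrak m a=0$, i.e. $\mathfrak m^2 a=0$, which is precisely $a\in\Ann_R(\mathfrak m^2)$. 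Then $(3)\Leftrightarrow(4)$: one checks that $\soc^2(R)=\Ann_R(\mathfrak m^2)$. Indeed, $\soc^2(R)$ is the preimage in $R$ of $\soc^1(R/Re)=\{\bar b: \mathfrak m \bar b=0\}=\{b: \mathfrak m b\subseteq Re\}$, and since $Re=\Ann_R(\mathfrak m)$ this set is $\{b:\mathfrak m^2 b=0\}=\Ann_R(\mathfrak m^2)$; so $\soc^2(R)$ and $\Ann_R(\mathfrak m^2)$ coincide.

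Next I would treat $(1)\Leftrightarrow(2)$. Assume $(2)$, so $Re\subseteq Ra$ (note $a\neq e$, but $e\in Ra$ because $\mathfrak m a\subseteq Re$ forces... actually one must argue $e\in Ra$: since $a\in\mathfrak m$, $Ra$ is a proper ideal, hence contains the unique minimal ideal $Re$). To see $Ra$ is \emph{minimal} among ideals containing $Re$: the quotient $Ra/Re$ is cyclic and annihilated by $\mathfrak m$ (by $(2)$), hence is a module over $R/\mathfrak m=\F_2$, so it is either $0$ or one-dimensional; if it were $0$ then $Ra=Re$, contradicting $a\neq e$, so $Ra/Re\cong\F_2$ and there is no ideal strictly between $Re$ and $Ra$. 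Conversely, if $Ra$ is minimal containing $Re$, then $Ra/Re$ is a minimal (simple) submodule of $R/Re$, hence annihilated by $\mathfrak m$, which gives $\mathfrak m a\subseteq Re$, i.e. $(2)$.

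For $(4)\Leftrightarrow(5)$ (equivalently $(3)\Leftrightarrow(5)$), the idea is to compute $|R/\Ann_R(a)|$. For a principal ideal, $Ra\cong R/\Ann_R(a)$ as $R$-modules, so $(5)$ says $|Ra|=4$. Given $(1)$--$(4)$, we have $Re\subseteq Ra$ with $Ra/Re\cong\F_2$ and $|Re|=2$ (as $Re=\{0,e\}$, since $Re\cong R/\Ann_R(e)=R/\mathfrak m=\F_2$), so $|Ra|=4$. Conversely, if $|Ra|=4$, then $Ra$ properly contains $Re$ (as $a\neq e$ and ... one checks $Ra\neq Re$ forces $|Ra|>2$, and $Ra\ni e$ since $Ra$ is proper hence contains the minimal ideal), and $Ra/Re$ has order $2$ hence is simple, so $Ra$ is minimal over $Re$, giving $(1)$. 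I expect the main subtlety to be the bookkeeping around the case $a\in\mathfrak m$ versus $a\notin\mathfrak m$ (the statement implicitly uses that $a\in\mathfrak m$, which itself should be extracted — if $a$ is a unit then $Ra=R$ which does not have order $4$ unless $|R|=4$, and in the relevant setting $R/\mathfrak m=\F_2$ one would separately note a unit $a$ satisfies none of $(1)$--$(4)$ properly; it may be cleanest to first observe that each of $(1)$--$(5)$ forces $a\in\mathfrak m$, or to state the lemma under the standing hypothesis). Beyond that, every implication is a one- or two-line module-theoretic argument, with the Frobenius identity $\Ann_R\Ann_R(-)=\mathrm{id}$ doing the real work in identifying $\soc^i(R)$ with $\Ann_R(\mathfrak m^i)$.
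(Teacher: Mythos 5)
Your proposal is correct, and its backbone (identifying $Re=\Ann_R(\mathfrak m)=\soc^1(R)$ via the double-annihilator property and reading off $(2)\Leftrightarrow(3)\Leftrightarrow(4)$ as $\soc^2(R)=\Ann_R(\mathfrak m^2)$) matches the paper's proof; your version of $(3)\Rightarrow(2)$ is in fact cleaner, since the paper only tests $x^2a=0$ where one really wants $yxa=0$ for all $y\in\mathfrak m$. Where you genuinely diverge is in handling condition $(5)$. The paper proves $(2)\Rightarrow(5)$ by showing $[\mathfrak m:\Ann_R(a)]=2$ directly (any two elements of $\mathfrak m\setminus\Ann_R(a)$ both send $a$ to $e$, hence their difference annihilates $a$), and proves $(5)\Rightarrow(1)$ by applying the duality $I\mapsto\Ann_R(I)$ a second time to reverse a hypothetical chain $Re\subsetneq Rb\subsetneq Ra$ and force $|R/\Ann_R(a)|\ge 8$. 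You instead use the module isomorphism $Ra\cong R/\Ann_R(a)$ to convert $(5)$ into $|Ra|=4$, and then the counting $|Ra|=|Re|\cdot|Ra/Re|=2\cdot 2$ together with simplicity of $Ra/Re$ closes the loop without a second appeal to the duality. Both are sound; yours is slightly more economical and makes the equivalence of $(1)$ and $(5)$ feel structural rather than computational, while the paper's index argument is the one that gets reused implicitly in \cref{lem:congruence}. One caveat you correctly sensed but did not fully resolve: the element $a=0$ satisfies $(2)$--$(4)$ vacuously but not $(1)$ or $(5)$, so the lemma needs $a\notin\{0,e\}$ (or $a\in\mathfrak m\setminus Re$); this is a gap in the paper's statement as much as in your write-up, and your suggestion to add a standing hypothesis is the right fix, though note that when $|R|=4$ a unit $a$ does satisfy all five conditions, so one cannot simply assert that $(1)$--$(5)$ force $a\in\mathfrak m$.
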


\begin{proof}

Let $\bar{a}$ be  the image of $a$ in $R/\soc^1(R)$. We have that
\[
\begin{aligned}
a\in \soc^2(R) &\iff \bar{a} \in \soc^1(R/\soc^1(R))=\soc^1(R/Re) \\
&\iff x\bar{a}\in R/Re, \forall x\in \mathfrak{m}\\
&\iff xa \in Re, \forall x\in \mathfrak{m}.
\end{aligned}
\]
Hence $(2)\iff (4)$. 

Now we will show that $(2) \iff (3)$. Suppose $(3)$ holds and let $x,y\in \mathfrak{m}$. We have $ya\in Re$, hence $ya=be$, for some $b\in R$. Thus, $xya=bxe=b0=0$, since $x\in \mathfrak{m}=\Ann_R(e)$. Therefore $a\in \Ann_R(\mathfrak{m}^2)$. Conversely, suppose that $(4)$ holds. Let $x\in \mathfrak{m}$. Then $x^2\in \mathfrak{m}^2$ and $x^2a=0$. Hence $xa\in \Ann_R(\mathfrak{m})=\Ann_R(\Ann_R(Re))=Re.$

We will prove that $(1) \implies (2)  \implies (5) \implies (1).$ First, we can see that $(1) \implies (2)$ by the definition of the socle. Let us show that $(2) \implies (5).$ Since $\Ann_{R}(a) \subsetneq \mathfrak{m}$ and $|R/\mathfrak{m}|=2$, it is sufficient to show that $[\mathfrak{m}:\Ann_{R}(a)] =2.$ In fact, let $x_1, x_2 \in \mathfrak{m} \setminus \Ann_{R}(a).$ Then $x_1 a, x_2 a \in Re \setminus \{0\}=\{e\}$ by \cref{lem:local}. As a result, $(x_1-x_2)a =0$ and hence $x_1-x_2 \in \Ann_{R}(a).$ This shows that $[\mathfrak{m}:\Ann_{R}(a)] =2.$ Finally, let us show that $(5) \implies (1).$ Suppose to the contrary, that there exists an ideal $J$ such that $Re \subsetneq J \subsetneq Ra.$ Since $Re \subsetneq J$, we can find $b \in J$ such that $b \neq e.$ We then have $Re \subsetneq Rb \subsetneq Ra.$ By the duality between an ideal in $R$ and its annihilator in $R$, we have the reversed inclusion $\Ann_{R}(Ra) \subsetneq \Ann_{R}(Rb) \subsetneq \Ann_{R}(Re).$ This would imply that $|R/\Ann_{R}(a)| \geq 8$, which is a contradiction. 
\end{proof}
Next, we show that Ramanujan sums satisfy some modular congruences (see also \cite[Proposition 3.12]{minavc2023arithmetic} for some further congruences when $R$ is a quotient of the polynomial ring $\F_q[x]$). 
\begin{lem} \label{lem:congruence}
Let $r_1, r_2 \in \mathfrak{m}.$ Then for all $x \in R$, we have the following congruence 
\[ c(r_1, R/\Ann_{R}(x)) \equiv c(r_2, R/\Ann_{R}(x)) \pmod{4}. \] 
\end{lem}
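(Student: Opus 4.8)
The plan is to reduce the congruence to a statement about a single fixed ring $\bar R := R/\Ann_R(x)$ and to exploit the fact that $\bar R$ is again a local (Frobenius, in fact artinian) ring with residue field $\F_2$. Recall that the Ramanujan sum appearing here is
\[
c(r, \bar R) = \sum_{u \in \bar R^\times} \bar\psi(ru),
\]
where $\bar\psi$ is an induced non-degenerate functional on $\bar R$ and $r$ is taken modulo $\Ann_R(x)$. Since $r_1, r_2 \in \mathfrak{m}$, their images $\bar r_1, \bar r_2$ lie in the maximal ideal $\bar{\mathfrak{m}}$ of $\bar R$. So it suffices to prove: \emph{if $(\bar R, \bar{\mathfrak{m}})$ is a finite local ring with $\bar R/\bar{\mathfrak{m}} \cong \F_2$ and $\bar\psi$ a non-degenerate functional, then $c(\bar r, \bar R)$ depends on $\bar r \in \bar{\mathfrak{m}}$ only modulo $4$} — and, more precisely, that $c(\bar r, \bar R)$ is constant modulo $4$ as $\bar r$ ranges over $\bar{\mathfrak m}$.

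First I would set up the counting identity. Write $\bar R^\times = 1 + \bar{\mathfrak{m}}$ (valid because the residue field is $\F_2$), so that
\[
c(\bar r, \bar R) = \sum_{m \in \bar{\mathfrak{m}}} \bar\psi\!\left(\bar r(1+m)\right) = \sum_{m \in \bar{\mathfrak{m}}} \zeta_n^{\psi(\bar r) + \psi(\bar r m)}.
\]
The cleanest route is to compare $c(\bar r_1, \bar R)$ and $c(\bar r_2, \bar R)$ by a bijective/pairing argument on $\bar{\mathfrak{m}}$: the difference of the two sums is a sum of terms $\zeta_n^{\psi(\bar r_1(1+m))} - \zeta_n^{\psi(\bar r_2(1+m))}$, and I would group the elements $1+m \in \bar R^\times$ into orbits under multiplication by a suitable subgroup (for instance the subgroup $1 + \Ann_{\bar R}(\bar{\mathfrak m}) = 1 + \bar R \bar e$, which has order $2$). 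Multiplication by $1 + \bar e$ sends $\bar\psi(\bar r_i u)$ to $\bar\psi(\bar r_i u) + \bar\psi(\bar r_i u \bar e)$, and $\bar\psi(\bar e)/n \equiv 1/2$ by Lemma~\ref{lem:local} applied to $\bar R$. So on each such $2$-element orbit the two character values are negatives of each other unless $\bar r_i u \bar e = 0$, i.e. unless $\bar r_i u \in \Ann_{\bar R}(\bar e) = \bar{\mathfrak m}$, which since $\bar r_i \in \bar{\mathfrak m}$ is automatic — hmm, that makes \emph{every} orbit contribute a cancelling pair $\zeta + (-\zeta) = 0$. That would force $c(\bar r_i,\bar R) = 0$, which is too strong; the resolution is that $\bar e$ may be zero in $\bar R$ (when $\Ann_R(x) \supseteq Re$), and when $\bar e \neq 0$ one does indeed get strong vanishing. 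So the argument naturally splits on whether $Re \subseteq \Ann_R(x)$ or not, and in the delicate case one must iterate: peel off $\soc^1(\bar R), \soc^2(\bar R), \dots$, and at each stage the contribution of a ``shell'' of $\bar{\mathfrak m}$ is either zero or a multiple of a power of $2$, with the total error between $r_1$ and $r_2$ landing in $4\Z$.

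The cleanest version of the above, and the one I would actually write, is an induction on $|\bar R|$ (equivalently on the socle length). The key structural input is Lemma~\ref{lem:secondary_minimal}: elements of $\soc^2(\bar R)$ are exactly those $a$ with $|\bar R/\Ann_{\bar R}(a)| = 4$, and $\bar{\mathfrak m}\cdot\soc^2(\bar R) \subseteq \soc^1(\bar R) = \bar R\bar e$. Using $\bar R^\times = 1+\bar{\mathfrak m}$, one computes $c(\bar r,\bar R)\bmod 4$ by splitting the sum according to the value of $\bar r u \bmod \soc^1$; the terms with $\bar r u \in \soc^1(\bar R)$ contribute a controlled amount, and for the rest one descends to the quotient $\bar R/\soc^1(\bar R)$, which is again local Frobenius with residue field $\F_2$ and strictly smaller, to which the inductive hypothesis applies (the descent of a non-degenerate functional to a Frobenius quotient is standard, cf. the socle duality already invoked in this section). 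The main obstacle, and where I'd spend the most care, is precisely this bookkeeping in the descent: ensuring that the ``defect'' terms — those involving $\psi$ evaluated on $\soc^1$, where $\zeta_n^{\psi(\cdot)} = \pm 1$ by Lemma~\ref{lem:local} — assemble into a multiple of $4$ rather than merely a multiple of $2$, since the base case of a single $K_2$-type factor already shows a lone power of $2$ can appear and must be absorbed by the \emph{difference} $c(r_1,\bar R) - c(r_2,\bar R)$ rather than by either term alone. A useful sanity check throughout is to test the claim against the two worked examples over $\F_2[x,y]/(x^2,y^2)$, where the eigenvalues $\lambda_r$ for $r \in \mathfrak m$ visibly lie in a single residue class mod $4$.
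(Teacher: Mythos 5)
Your proposal does not reach a proof. Two concrete problems. First, the pairing argument is internally inconsistent: you correctly observe that multiplication by $1+\bar e$ changes $\psi(\bar r_i u)$ by $\psi(\bar r_i u\bar e)$ and that $\bar r_i u\bar e=0$ holds automatically (since $\bar r_i\in\bar{\mathfrak m}=\Ann_{\bar R}(\bar e)$), which means the two members of each orbit contribute \emph{equal} character values, so each orbit contributes $2\zeta$ --- yet you then conclude that each orbit contributes a cancelling pair $\zeta+(-\zeta)=0$. That conclusion contradicts your own premises, and the ``resolution'' you offer (that $\bar e$ might vanish in $\bar R$) addresses a problem that is not the actual one. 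Second, the inductive descent through the socle filtration is only a plan: you yourself flag the mod-$4$ bookkeeping of the ``defect'' terms as the main unresolved obstacle, and that bookkeeping is exactly the content of the lemma. Nothing in the proposal establishes that the difference $c(r_1,\cdot)-c(r_2,\cdot)$ lands in $4\Z$ rather than merely $2\Z$.

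The ingredient you are missing is the closed-form (H\"older-type) evaluation of the Ramanujan sum that the paper uses,
\[ c(r, \Ann_{R}(x)) \;=\; \frac{\varphi(R/\Ann_{R}(x))}{\varphi(R/\Ann_{R}(rx))}\, \mu(R/\Ann_{R}(rx)), \]
which collapses the whole computation: since $\mu(R/\Ann_{R}(rx))\neq 0$ forces $\Ann_{R}(rx)\in\{\mathfrak m,R\}$, every value of $c(r,R/\Ann_{R}(x))$ lies in $\{0,\pm\varphi(R/\Ann_{R}(x))\}$. If $Rx=Re$ the two sums are literally equal; if $x\neq e$ then $|R/\Ann_{R}(x)|\ge 4$, so $\varphi(R/\Ann_{R}(x))\equiv-\varphi(R/\Ann_{R}(x))\pmod 4$, which settles the case where both $r_1x$ and $r_2x$ lie in $Re$ (and the case where neither does is trivial, both sums being $0$); in the mixed case $x\notin\soc^2(R)$, so \cref{lem:secondary_minimal} gives $|R/\Ann_{R}(x)|\ge 8$, hence $\varphi(R/\Ann_{R}(x))\equiv 0\pmod 4$ and both sums vanish mod $4$. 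Your instinct to invoke \cref{lem:secondary_minimal} and the threshold $|R/\Ann_{R}(x)|=4$ is exactly right, but it has to be applied to the closed form of the sum, not fought through a character-by-character induction.
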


\begin{proof}
We recall the definition of the Ramanujan sum  
\[ c(r, \Ann_{R}(x)) = \dfrac{\varphi(R/\Ann_{R}(x))}{\varphi(R/\Ann_{R}(rx))} \mu(R/\Ann_{R}(rx)). \] 
Furthermore, we know that $\mu(R/\Ann_{R}(rx)) \neq 0$ if and only $\Ann_{R}(rx) \in \{\mathfrak{m}, R \}.$ Consequently, $c(r, \Ann_{R}(x)) \in \{0, \varphi(R/\Ann_{R}(x)), - \varphi(R/\Ann_{R}(x)) \}.$ We consider two cases. 

\noindent \textbf{Case 1.} $Rx=Re$. In this case $x=e$ and $r_1x=r_2x=0.$ Therefore 
\[ c(r_1, R/\Ann_{R}(x)) = c(r_2, R/\Ann_{R}(x)). \]  

\noindent\textbf{Case 2.} $x \neq e.$ In this case $\Ann_{R}(x)$ is a proper ideal of $\mathfrak{m}$ and hence $|R/\Ann_{R}(x)| \geq 4.$ Consequently, $2\varphi(R/\Ann_{R}(x)) = |R/\Ann_{R}(x)| \equiv 0 \pmod{4}$. Therefore, if $r_1x, r_2x \in Re$, then both $c(r_1, \Ann_{R}(x)), c(r_2, \Ann_{R}(x)) \in \{ \varphi(R/\Ann_{R}(x)), - \varphi(R/\Ann_{R}(x)) \}$, and hence they are the same modulo $4.$ Similarly, if both $r_1x, r_2x$ are not in $Re$ then both these Ramanujan sums are zero. Finally, let us consider the case where exactly one of $r_1x, r_2x$ belongs to $Re.$ In this case, $x \not \in \soc^2(R)$. By \cref{lem:secondary_minimal}, we know that the order of $R/\Ann_{R}(x)$ is at least $8$ and therefore $\varphi(R/\Ann_{R}(x)) \equiv 0 \pmod{4}.$ As a result, $c(r_1, R/\Ann_{R}(x)) \equiv c(r_2, R/\Ann_{R}(x)) \equiv 0 \pmod{4}.$ 
\end{proof}

Let $\Isoc^2(R)$ be the set of ideals of the form $Ra$ where $a \in \soc^2(R).$ We remark that, by definition, $Re$ is an element of $\text{Isoc}^2(R).$ The following theorem explains the existence of PST in \cref{expl:first}. 

\begin{thm} \label{thm:local}
    Suppose that $|D \cap \Isoc^2(R)| \equiv 1 \pmod{2}.$ Then $G_{R}(D)$ has PST between $0$ and $e$ at time $\frac{\pi}{2}.$
\end{thm}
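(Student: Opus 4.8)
The plan is to verify the two conditions of \cref{prop:simplified_condition} for $t = \frac{\pi}{2}$, which means showing that for every $r \in \mathfrak{m}$ we have $\lambda_r - \lambda_0 \equiv 0 \pmod 4$ and $\lambda_r - \lambda_1 \equiv 2 \pmod 4$. Indeed, with $t/2\pi = 1/4$, condition (1) becomes $(\lambda_r - \lambda_0)/4 \in \Z$, i.e. $\lambda_r \equiv \lambda_0 \pmod 4$, and condition (2) becomes $(\lambda_r - \lambda_1)/4 + 1/2 \in \Z$, i.e. $\lambda_r - \lambda_1 \equiv 2 \pmod 4$. So the whole theorem reduces to two congruences modulo $4$ among the eigenvalues.

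The key input is the description of eigenvalues as sums of Ramanujan sums. Writing $D = \{\mathcal I_1, \ldots, \mathcal I_k\}$ with $\mathcal I_j = Ra_j$, the spectral formula from \cite[Theorem 4.16]{nguyen2025gcd} gives $\lambda_r = \sum_{j=1}^{k} c(r, R/\Ann_R(a_j))$ (up to the exact normalization used in that reference, which I would quote precisely). By \cref{lem:congruence}, for any $r_1, r_2 \in \mathfrak m$ and any $x \in R$ we have $c(r_1, R/\Ann_R(x)) \equiv c(r_2, R/\Ann_R(x)) \pmod 4$; taking $r_1 = r \in \mathfrak m$ and $r_2 = 0$ and summing over the generators of the ideals in $D$ yields immediately $\lambda_r \equiv \lambda_0 \pmod 4$, which is condition (1). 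For condition (2), I would apply the same lemma but now compare $\lambda_1$ (a unit, so $\lambda_1 = \lambda_u$ for every unit $u$) against $\lambda_r$: the point is that $\lambda_1 = \sum_j c(1, R/\Ann_R(a_j))$ where $c(1, R/\Ann_R(a_j)) = \varphi(R/\Ann_R(a_j)) = \mu(R) \cdot (\text{something})$ — concretely $c(1, \cdot)$ is just the count of units in the relevant quotient. The difference $\lambda_r - \lambda_1$ modulo $4$ therefore reduces, term by term, to $c(r, R/\Ann_R(a_j)) - c(1, R/\Ann_R(a_j)) \pmod 4$, and \cref{lem:congruence} handles all terms except possibly those where the quotient $R/\Ann_R(a_j)$ is small, i.e. where $a_j \in \soc^2(R)$, equivalently $Ra_j \in \Isoc^2(R)$.

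So the main obstacle — and the place where the parity hypothesis $|D \cap \Isoc^2(R)| \equiv 1 \pmod 2$ enters — is the analysis of the terms with $a_j \in \soc^2(R)$. For such $a_j$, $|R/\Ann_R(a_j)| \in \{2, 4\}$: if $Ra_j = Re$ then $R/\Ann_R(a_j)$ has order $2$, $c(1, \cdot) = 1$ and $c(r, \cdot) = \varphi(R/\Ann_R(a_j)) \cdot (\pm 1)$ or $0$ depending on whether $ra_j = 0$; if $|R/\Ann_R(a_j)| = 4$ then $\varphi = 2$ and one computes $c(1,\cdot)$ and $c(r,\cdot)$ explicitly — here $c(1, \cdot) = 2$ while $c(r, \cdot) \in \{2, -2, 0\}$, and using $ra_j \in Re$ (from \cref{lem:secondary_minimal}(2)) together with \cref{lem:local} one checks the contribution of each such term to $\lambda_r - \lambda_1$ is $\equiv 2 \pmod 4$. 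Thus each of the $|D \cap \Isoc^2(R)|$ small terms contributes an odd multiple of $2$ to $\lambda_r - \lambda_1 \pmod 4$, while every other term contributes $0 \pmod 4$ by \cref{lem:congruence}; summing, $\lambda_r - \lambda_1 \equiv 2 |D \cap \Isoc^2(R)| \equiv 2 \pmod 4$, which is exactly condition (2). I would be careful to treat the case $Ra_j = Re$ (order $2$ quotient) separately and check the $\pm$ signs and the vanishing-vs-nonvanishing dichotomy for $c(r, \cdot)$ carefully, since that is the only genuinely delicate bookkeeping; everything else follows formally from \cref{lem:congruence} and \cref{prop:simplified_condition}.
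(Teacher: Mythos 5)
Your overall strategy coincides with the paper's: reduce via \cref{prop:simplified_condition} with $t/2\pi = 1/4$ to the two congruences $\lambda_r \equiv \lambda_0 \pmod 4$ and $\lambda_r - \lambda_1 \equiv 2 \pmod 4$ for all $r \in \mathfrak{m}$, obtain the first from \cref{lem:congruence}, and extract the second from the terms indexed by $D \cap \Isoc^2(R)$, each contributing $2 \pmod 4$, so that the parity hypothesis closes the argument. The paper organizes this slightly differently (it computes $\lambda_0 \bmod 4$ and the exact value of $\lambda_1$ separately rather than comparing $\lambda_r$ with $\lambda_1$ term by term), but that difference is cosmetic.

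There is, however, a genuine error in your evaluation of $\lambda_1$, and it is not benign. You assert that $c(1, R/\Ann_{R}(a_j))$ ``is just the count of units in the relevant quotient,'' i.e.\ $\varphi(R/\Ann_{R}(a_j))$; that is $c(0,\cdot)$, not $c(1,\cdot)$. From the formula $c(r,\cdot) = \frac{\varphi(R/\Ann_{R}(a_j))}{\varphi(R/\Ann_{R}(r a_j))}\,\mu(R/\Ann_{R}(r a_j))$ one gets $c(1, R/\Ann_{R}(a_j)) = \mu(R/\Ann_{R}(a_j))$, which equals $-1$ when $a_j = e$ and $0$ in every other case (the quotient is then a local ring that is not a field); hence $\lambda_1 = -1$ or $0$ according as $Re \in D$ or not. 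With your stated values ($c(1,\cdot)=1$ for the $Re$ term, $c(1,\cdot)=2$ for the order-$4$ terms) the per-term differences come out $\equiv 0 \pmod 4$ --- for instance, for $a_j = e$ and $r \in \mathfrak{m}$ one has $re=0$ and $c(r,\cdot)=1$, so your difference would be $1-1=0$ rather than $1-(-1)=2$ --- and summing would give $\lambda_r - \lambda_1 \equiv 0 \pmod 4$, i.e.\ condition (2) of \cref{prop:simplified_condition} would \emph{fail}. The correct values ($2$ for $a_j = e$, and $\pm 2 - 0 \equiv 2 \pmod 4$ for the other terms with $a_j \in \soc^2(R)$, using \cref{lem:secondary_minimal}) do yield $\lambda_r - \lambda_1 \equiv 2\,|D \cap \Isoc^2(R)| \equiv 2 \pmod 4$, so your plan is salvageable, but as written the arithmetic contradicts the conclusion you assert. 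A minor additional imprecision: \cref{lem:congruence} only compares $c(r_1,\cdot)$ and $c(r_2,\cdot)$ for $r_1, r_2 \in \mathfrak{m}$, so it does not by itself handle any term of $\lambda_r - \lambda_1$; for $a_j \notin \soc^2(R)$ you need the direct observation that $4 \mid \varphi(R/\Ann_{R}(a_j))$ and $c(1,\cdot)=0$ there.
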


\begin{proof}
Let $D= \{Rx_1, \ldots, Rx_d \}$.  Let us first consider the case $Re \in D.$ In this case, we have 
\[ \lambda_0 = \sum_{i=1}^d \varphi(R/\Ann_{R}(x_i)). \]
By \cref{lem:secondary_minimal}, we know that $\varphi(R/\Ann_{R}(x_i))$ is a multiple of $4$ unless $x_i \in \soc^2(R).$ We conclude that 
\[ \lambda_0 = 1 +  \sum_{x_i \in \soc^2(R) \setminus \{e \}} \varphi(R/\Ann_{R}(x_i)) \equiv 1 \pmod{4}. \]
By \cref{lem:congruence}, we know that $\lambda_r \equiv 1 \pmod{4}$ for all $r \in \mathfrak{m}$. On the other hand, we have 
\[ \lambda_1 = \sum_{i=1}^d \mu(R/\Ann_{R}(x_i)) = -1. \]
This is because $\mu(R/\Ann_{R}(x_i))=0$ unless $x_i=e.$ By \cref{prop:simplified_condition}, there exists PST between $0$ and $e$ at time $\frac{\pi}{2}.$

Similarly, in the case $Re \not \in D$, we can show that $\lambda_r \equiv 2 \pmod{4}$ for all $r \in \mathfrak{m}$ and $\lambda_1 = 0.$ By \cref{prop:simplified_condition}, there exists PST between $0$ and $e$ at time $\frac{\pi}{2}.$
\end{proof}

We discuss a special case where the converse of \cref{thm:local} holds; namely when $R$ is a principal ideal local ring (also known as a finite chain ring). 
\begin{cor}
    Suppose that $(R,\mathfrak{m})$ is a principal ideal local ring with a generator $\alpha$ and residue field $\F_2$. Let $n$ be the smallest positive integer such that $\alpha^n=0.$ In this case, $\Isoc^2(R)= \{R\alpha^{n-2}, R \alpha^{n-1}\}.$  Furthermore, the converse of \cref{thm:local} is also true. In other words, a gcd-graph $G_R(D)$ has PST if and only if $|D \cap \{R\alpha^{n-1}, R\alpha^{n-2}\}| =1$. 
\end{cor}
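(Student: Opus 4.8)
The plan is to first pin down the structure of a finite chain ring $(R,\mathfrak{m})$ with $\mathfrak{m}=(\alpha)$ and residue field $\F_2$: its ideals form the chain $R \supsetneq R\alpha \supsetneq R\alpha^2 \supsetneq \cdots \supsetneq R\alpha^{n-1} \supsetneq R\alpha^n = 0$, and each quotient $R\alpha^i/R\alpha^{i+1}$ is one-dimensional over $\F_2$, so $|R\alpha^i| = 2^{n-i}$. In particular the unique minimal ideal is $Re = R\alpha^{n-1}$, so $e = u\alpha^{n-1}$ for a unit $u$; replacing $\alpha$ by $u\alpha$ we may take $e = \alpha^{n-1}$. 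Then I would identify $\soc^2(R)$: by \cref{lem:secondary_minimal}(4), $a \in \soc^2(R)$ iff $|R/\Ann_R(a)| = 4$ (for $a \ne e$), together with $e \in \soc^2(R)$. Writing a nonzero $a = v\alpha^k$ with $v$ a unit and $0 \le k \le n-1$, we have $\Ann_R(a) = R\alpha^{n-k}$ and $R/\Ann_R(a) \cong R/R\alpha^{n-k}$ has order $2^{n-k}$; this equals $4$ precisely when $k = n-2$. Hence $\soc^2(R) = R\alpha^{n-2}$ (as a set of elements), and $\Isoc^2(R) = \{R\alpha^{n-2}, R\alpha^{n-1}\}$, which is the first assertion. (If $n = 1$ the ring is $\F_2$ and the statement is vacuous/degenerate; if $n=2$ then $R\alpha^{n-2} = R$, and one checks the argument still goes through.)

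The forward direction of the equivalence is exactly \cref{thm:local}, since $|D \cap \{R\alpha^{n-1}, R\alpha^{n-2}\}| = |D \cap \Isoc^2(R)|$; when this is $1$ it is odd, so $G_R(D)$ has PST. For the converse I would argue the contrapositive: suppose $|D \cap \Isoc^2(R)|$ is even, i.e. either $0$ or $2$, and show $G_R(D)$ has no PST. By \cref{prop:upper_bound}, PST on $G_R(D)$ can only occur between $0$ and $e$, so it suffices to show the criterion of \cref{prop:simplified_condition} fails. The idea is to produce an element $r \in \mathfrak{m}$ with $\lambda_r = \lambda_1$; then condition (2) of \cref{prop:simplified_condition} reads $\tfrac12 \in \Z$, which is impossible, ruling out PST at every time $t$. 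The natural candidate is an element just below the top of the chain, e.g. $r = 1 + \alpha^{n-1} = 1 + e \in \mathfrak{m}$ (note $r \in \mathfrak{m}$ since $R/\mathfrak{m} = \F_2$ forces $1+1 = 0$ there — more precisely $r \equiv 1 \pmod{\mathfrak{m}}$ would say $r$ is a unit, so instead take $r = \alpha$, the generator of $\mathfrak{m}$, and compare $\lambda_\alpha$ with $\lambda_1$).

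The heart of the matter, and the step I expect to be the main obstacle, is the explicit eigenvalue computation showing $\lambda_\alpha = \lambda_1$ when $|D \cap \Isoc^2(R)|$ is even. Using the Ramanujan-sum description $\lambda_r = \sum_{i} c(r, R/\Ann_R(x_i))$ and the formula $c(r, R/\Ann_R(x)) = \tfrac{\varphi(R/\Ann_R(x))}{\varphi(R/\Ann_R(rx))}\mu(R/\Ann_R(rx))$, one has $c(r, R/\Ann_R(x)) = \varphi(R/\Ann_R(x))$ when $rx = 0$, $= -\varphi(R/\Ann_R(x))/\big(|R/\Ann_R(x)|\big)\cdot|R/\Ann_R(x)|$... more simply: the sum $c(1, R/\Ann_R(x)) = \mu(R/\Ann_R(x))$ is $-1$ if $x = e$ and $0$ otherwise, while $c(\alpha, R/\Ann_R(x))$ is nonzero only when $\Ann_R(\alpha x) \in \{\mathfrak{m}, R\}$, i.e. $\alpha x \in \{0, \text{generator of } \soc^1\}$, i.e. $x \in \{e\} \cup (\soc^2(R)\setminus\{e\})$; in the latter case $c(\alpha, R/\Ann_R(x)) = -\varphi(R/\Ann_R(x)) = -2$ (since $|R/\Ann_R(x)| = 4$), and for $x = e$ it is $+1$. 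Thus $\lambda_1 = -|D \cap \{Re\}|$ and $\lambda_\alpha = |D \cap \{Re\}| - 2\,|D \cap (\soc^2(R)\setminus\{e\})|$. When $|D \cap \Isoc^2(R)|$ is even, $|D\cap\{Re\}|$ and $|D \cap (\soc^2(R)\setminus\{e\})|$ have the same parity, and a short case check (both present: $\lambda_1 = -1$, $\lambda_\alpha = 1 - 2 = -1$; both absent: $\lambda_1 = 0 = \lambda_\alpha$) gives $\lambda_\alpha = \lambda_1$ in each case. Combined with \cref{lem:congruence} to handle bookkeeping of the remaining (multiple-of-4) contributions, this forces the failure of \cref{prop:simplified_condition}(2) and hence the absence of PST, completing the converse.
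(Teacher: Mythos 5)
Your proposal is correct and follows essentially the same route as the paper: the forward direction is \cref{thm:local}, and the converse is the two-case computation showing $\lambda_\alpha=\lambda_1$ (both $-1$ or both $0$), which makes condition (2) of \cref{prop:simplified_condition} demand $\tfrac12\in\Z$. The only cosmetic difference is that you spell out why $\Isoc^2(R)=\{R\alpha^{n-2},R\alpha^{n-1}\}$ and appeal to \cref{lem:congruence} for the remaining terms, which is unnecessary since for $x_i\notin\soc^2(R)$ those Ramanujan sums vanish exactly.
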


\begin{proof}
    By \cref{thm:local} if $|D \cap \{R\alpha^{n-1}, R\alpha^{n-2}\}| =1$, then there exists PST between $0$ and $e=\alpha^{n-1}$ at time $\frac{\pi}{2}.$ Suppose that $|D \cap \{R\alpha^{n-1}, R\alpha^{n-2}\}| \in \{0, 2\}$. We claim that there is no PST on $G_{R}(D).$ We consider two cases. 

    \noindent\textbf{Case 1.} $R \alpha^{n-1}, R \alpha^{n-2} \in D.$ In this case, we have $\lambda_1 = -1$ and
    \begin{align*}
     \lambda_{\alpha} &= \frac{\varphi(R/\Ann_{R}(\alpha^{n-2}))}{\varphi(R/\Ann_{R}(\alpha^{n-1}))} \mu(R/\Ann_{R}(\alpha^{n-1}))+ \frac{\varphi(R/\Ann_{R}(\alpha^{n-1}))}{\varphi(R/\Ann_{R}(\alpha^{n}))} \mu(R/\Ann_{R}(\alpha^{n})) 
     \\ &= (-2)+1 = -1. 
    \end{align*}
    By \cref{prop:simplified_condition}, there is no PST on $G_R(D).$
    
    \noindent \textbf{Case 2.} $R \alpha^{n-1}, R \alpha^{n-2} \not \in D.$ In this case, $\lambda_1 = \lambda_{\alpha}=0.$ As a result, \cref{prop:simplified_condition} also implies that there is no PST on $G_R(D).$
\end{proof}

We discuss another case in which the converse of \cref{thm:local} also holds. 

\begin{cor}
    Suppose that $D = \{R, Ra \}$ for some $a \in \mathfrak{m}.$ Then the converse of \cref{thm:local} holds. In other words, $G_{R}(D)$ has PST between $0$ and $e$ if and only if $|D \cap \Isoc^2(R)| =1$. 
\end{cor}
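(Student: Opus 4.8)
The plan is to prove the converse directly, mirroring the structure of the proof of the previous corollary (the finite chain ring case) but without assuming $R$ is a chain ring. Since $D = \{R, Ra\}$ with $a \in \mathfrak{m}$, and by \cref{thm:local} the implication ``$|D \cap \Isoc^2(R)| = 1 \Rightarrow$ PST'' already holds, we only need to rule out PST in the two cases $|D \cap \Isoc^2(R)| \in \{0,2\}$. Note $R \in \Isoc^2(R)$ trivially fails — wait, $R$ is not of the form $Rx$ with $x\in\soc^2(R)$ unless $R$ itself equals $\soc^2(R)$, i.e.\ $\mathfrak m^2=0$; so the value of $|D\cap\Isoc^2(R)|$ is governed by whether $Ra \in \Isoc^2(R)$ (equivalently $a \in \soc^2(R)$, equivalently $|R/\Ann_R(a)| = 4$ by \cref{lem:secondary_minimal}) together with whether $R\in\Isoc^2(R)$, i.e. whether $\mathfrak m^2=0$.

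First I would dispose of the case $\mathfrak{m}^2 = 0$: then $\soc^2(R) = R$, so both $R$ and $Ra$ lie in $\Isoc^2(R)$, giving $|D \cap \Isoc^2(R)| = 2$, and I must show there is no PST. Here every $r \in \mathfrak m$ satisfies $ra \in \mathfrak m^2 = 0$ and $r\cdot 1\in\mathfrak m$, so $\lambda_r = \sum_{s\in S}\zeta_n^{\psi(rs)}$; I would compute $\lambda_0 = |R| + |R\setminus\Ann_R(a)|\cdot(\text{something})$ — more cleanly, use the Ramanujan-sum formula: $\lambda_1 = \mu(R/\Ann_R(R)) + \mu(R/\Ann_R(a)) = 1 + \mu(R/\Ann_R(a))$, and since $|R/\Ann_R(a)|=4$ when $a\in\soc^2(R)\setminus\{e\}$... actually I should split on whether $a = e$. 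The key point is to show $\lambda_1 = \lambda_r$ for some $r \in \mathfrak m$, contradicting condition (2) of \cref{prop:simplified_condition}; concretely, because $\mathfrak m^2 = 0$ forces $\lambda_r$ for $r\in\mathfrak m$ to take few values and $\lambda_1$ to coincide with one of them — this parity/counting bookkeeping is the routine part.

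Second, the case $a \notin \soc^2(R)$ (so $Ra \notin \Isoc^2(R)$, and also $\mathfrak m^2\neq 0$ so $R\notin\Isoc^2(R)$, giving $|D\cap\Isoc^2(R)|=0$): here I would show $\lambda_1 = 0$ and $\lambda_\alpha = 0$ for a suitable $\alpha$, or more robustly that $\lambda_1 = \lambda_r$ for all $r\in\mathfrak m$. Indeed $\lambda_1 = \mu(R) + \mu(R/\Ann_R(a)) = 1 + 0 = 1$? No — $\mu(R/\Ann_R(a)) \ne 0$ only if $\Ann_R(a) \in \{\mathfrak m, R\}$, i.e. $a = e$ or $a$ a unit; since $a\in\mathfrak m$ and $a\notin\soc^2(R)$ we have $a\neq e$, so $\mu(R/\Ann_R(a)) = 0$ and $\lambda_1 = 1 + \mu(R) $; but $\mu(R) = \mu(R/\Ann_R(R)) = \mu(R/R) = 1$, so $\lambda_1 = 1 \cdot$... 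I need to recheck normalization, but the upshot should be that since neither generator contributes a nonzero Möbius term beyond the trivial ideal, $\lambda_1$ equals its generic value on $\mathfrak m$; then condition (2) of \cref{prop:simplified_condition} fails because $(\lambda_r - \lambda_1)\frac{t}{2\pi} = 0$ for all $r\in\mathfrak m$ cannot be shifted by $\tfrac12$.

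The main obstacle will be the case $\mathfrak m^2 = 0$ with $a \ne e$ a nonzero element of $\mathfrak m$: one must carefully track $\lambda_1$ versus the values $\lambda_r$ for $r\in\mathfrak m$ and exhibit an equality $\lambda_1 = \lambda_{r_0}$ that kills condition (2). I expect this to come down to: when $\mathfrak m^2=0$, for $r\in\mathfrak m$ one has $\lambda_r = 1 + c(r, R/\Ann_R(a))$ while $\lambda_1 = 1 + \mu(R/\Ann_R(a)) = 1$ (as $a\ne e$), and since $\Ann_R(a)=\mathfrak m$ forces $c(r,R/\Ann_R(a))$ to range over $\{\varphi(R/\mathfrak m), -\varphi(R/\mathfrak m)\} = \{1,-1\}$ with the value $-1$ attained — hence $\lambda_{r_0} = 0 \ne -1 = $ no... I would instead show $\lambda_1 = 1$ is attained among $\{\lambda_r : r\in\mathfrak m\}$, say $\lambda_{r_0}=1$ for $r_0$ with $r_0 a = 0$, so that $(\lambda_{r_0}-\lambda_1)\tfrac{t}{2\pi}+\tfrac12 = \tfrac12 \notin \Z$, the desired contradiction. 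Verifying that such an $r_0 \in \mathfrak m \setminus \{0\}$ with $r_0 a = 0$ exists (equivalently $\Ann_{\mathfrak m}(a) \ne 0$) is immediate since $\mathfrak m^2 = 0$ gives $\mathfrak m \subseteq \Ann_R(a)$, so in fact $\lambda_r = \lambda_1$ for every $r\in\mathfrak m$, and we are done. Assembling these cases completes the proof.
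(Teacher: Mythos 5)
Your skeleton is the right one---reduce everything to condition (2) of \cref{prop:simplified_condition} and exhibit some $r\in\mathfrak{m}$ with $\lambda_r=\lambda_1$---and you correctly notice that $R\in\Isoc^2(R)$ exactly when $\mathfrak{m}^2=0$, a degenerate case the paper's own proof passes over (there $\mathfrak{m}\subseteq\Ann_R(\mathfrak{m})=Re$ forces $|R|=4$, $a=e$, $G_R(D)=K_4$, so it is harmless). But in the main case $|D\cap\Isoc^2(R)|=0$, i.e.\ $a\notin\soc^2(R)$, your argument has a genuine gap. The claim that $(\lambda_r-\lambda_1)\frac{t}{2\pi}=0$ for \emph{all} $r\in\mathfrak{m}$ is false: $\lambda_0=|S|>0$ and $\lambda_e=-\varphi(R)+\varphi(R/\Ann_R(a))<0$, while $\lambda_1=0$. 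What the proof needs is one witness $b\in\mathfrak{m}$ with $\lambda_b=0$, and producing it is precisely where the hypothesis $a\notin\soc^2(R)$ must enter: by \cref{lem:secondary_minimal} there exists $b\in\mathfrak{m}$ with $ab\notin Re$; such a $b$ automatically satisfies $b\notin\{0,e\}$ and $ab\notin\{0,e\}$, so both M\"obius factors in $\lambda_b=c(b,R)+c(b,R/\Ann_R(a))$ vanish and $\lambda_b=0=\lambda_1$. Your write-up never uses $a\notin\soc^2(R)$ beyond deducing $a\neq e$; as written, the same reasoning would apply verbatim when $a\in\soc^2(R)\setminus\{e\}$ and $\mathfrak{m}^2\neq 0$ (where one also has $\lambda_1=0$ but PST does exist by \cref{thm:local}). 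That your argument would ``prove'' too much is the sign that an essential step, not mere bookkeeping, is missing. This witness $b$ is exactly how the paper closes the case.

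Two secondary points. First, $\mu(R)=0$, not $1$: the relevant quantity is $\mu(R/\Ann_R(1))=\mu(R/0)=\mu(R)$, not $\mu(R/R)$, and $R$ is local with $\mathfrak{m}\neq 0$; this settles $\lambda_1=0$ cleanly. Second, in the $\mathfrak{m}^2=0$ case the equality $\lambda_e=\lambda_1$ does not follow from $ea=0$ alone; one must compare $c(e,R)+c(e,R/\mathfrak{m})=-2+1=-1$ with $c(1,R)+c(1,R/\mathfrak{m})=0+(-1)=-1$ (or just observe that the graph is $K_4$). These are repairable, but the missing witness in the $a\notin\soc^2(R)$ case is the substantive defect.
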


\begin{proof}
    The forward direction is a consequence of \cref{thm:local}. Let us prove the other direction, that is, if $|D \cap \Isoc^2(R)|=0$ then $G_{R}(D)$ has no PST. In fact, in this case we have $\lambda_1 =0.$ Furthermore, since $a \not \in \soc^2(R)$, there exists $b \in \mathfrak{m}$ such that $ab \not \in Re.$ We then see that
    \[ \lambda_b = \frac{\varphi(R)}{\varphi(R/\Ann_{R}(b))} \mu(R/\Ann_{R}(b)) + \frac{\varphi(R/\Ann_{R}(a)}{\varphi(R/\Ann_{R}(ab))} \mu(R/\Ann_{R}(ab)) =0.\]
    The last equality follows from the fact that both $\Ann_{R}(b)$ and $\Ann_{R}(ab)$ are proper ideals of $\mathfrak{m}.$ By \cref{prop:simplified_condition}, we conclude that there is no PST on $G_{R}(D).$
\end{proof}

When $R$ is an $\F_2$-algebra, then, as an abelian group, $R$ is isomorphic to $\F_2^r$ for some $r \geq 0.$ In this case, $G_{R}(D)$ is an example of a cube-like graph. The study of PST on these graphs has extensive literature (see \cite{cheung2011perfect} and the references therein for further discussion). We will show below that \cref{thm:local} is consistent with \cite[Theorem 2.3]{cheung2011perfect}. To do so, we first need the following lemma.

\begin{lem}  \label{lem:sum_elements}
Let $a \in R.$ Then 
  \[
\sum_{b \sim a } b =
\begin{cases}
e & \text{if } R/\Ann_{R}(a) \in \{\F_2, \F_2[x]/x^2\}, \\
0 & \text{otherwise. } 
\end{cases}
\]
\end{lem}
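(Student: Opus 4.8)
The plan is to compute the sum $\sum_{b \sim a} b$ directly, where the sum runs over all $b \in R$ adjacent to $a$ in $G_R(D)$, i.e.\ over all $b$ with $R(b-a) \in D$. Writing $b = a + s$ with $s \in S$, we have $\sum_{b \sim a} b = |S| \cdot a + \sum_{s \in S} s$, so the computation reduces to understanding the parity of $|S|$ and the value of $\sum_{s \in S} s$ in $R$ (recall $R$ is an $\F_2$-algebra, so everything lives in characteristic $2$ and $|S|\cdot a$ is either $0$ or $a$). Since $S$ is a union of sets of the form $\mathcal{I} \setminus (\text{smaller ideals})$ — more precisely, for each $\mathcal{I}=R c \in D$ the contribution to $S$ is the set of generators of $\mathcal{I}$, which is $Rc \setminus \mathfrak{m}c$ (the elements $x$ with $Rx = Rc$) — it suffices to handle one principal ideal $Rc$ at a time and sum over $D$.

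The key computation is therefore: for a fixed $c \in R$, evaluate $\Sigma(c) := \sum_{x : Rx = Rc} x$ and the parity of $N(c) := \#\{x : Rx = Rc\}$. The set $\{x : Rx = Rc\}$ is a coset-like object: it is $\{uc : u \in R^\times\}$, and since $R/\mathfrak m = \F_2$ the units are exactly $1 + \mathfrak m$, so $\{x : Rx = Rc\} = c + \mathfrak m c$, a coset of the subgroup $\mathfrak m c$ of size $|\mathfrak m c| = |R/\Ann_R(c)|/2 = \tfrac12|R/\Ann_R(c)|$ (using $Rc \cong R/\Ann_R(c)$ as $R$-modules and that $\mathfrak m c$ has index $2$ in $Rc$). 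Hence $N(c) = \tfrac12 |R/\Ann_R(c)|$, which is odd iff $|R/\Ann_R(c)| = 2$, i.e.\ iff $Rc = Re$ (equivalently $R/\Ann_R(c)\cong\F_2$). For the sum over a coset $v + H$ of a finite abelian $2$-group: if $|H| \geq 3$ then... actually in characteristic $2$, $\sum_{h \in H} h = 0$ whenever $H$ is not of the form $\{0\}$ or $\Z/2$ (an elementary-abelian or more generally any finite abelian $2$-group with $|H|\ge 2$: the sum equals the unique element of order $\le 2$ summed appropriately — standard: $\sum_{h\in H} h$ is $0$ unless $H\cong \Z/2$, in which case it is the nonzero element). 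Applying this to $H = \mathfrak m c$: we get $\sum_{x: Rx=Rc} x = N(c)\cdot c + \sum_{h \in \mathfrak m c} h$, and one checks this is $e$ precisely when $R/\Ann_R(c) \cong \F_2$ (then $\mathfrak m c = 0$, $N(c)=1$, and $c = e$) or $R/\Ann_R(c) \cong \F_2[x]/x^2$ (then $|\mathfrak m c| = 2$, so $\mathfrak m c = \{0, e\}$ by minimality of $Re$, giving $\sum_{h\in\mathfrak m c} h = e$ and $N(c) = 2$ even, so the total is $e$), and $0$ in all remaining cases (either $N(c)$ even and $\sum_{h\in\mathfrak m c}h = 0$, or the two contributions cancel). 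Summing over all $Rc \in D$ and noting the problem only asserts the statement for the single ideal-orbit (the lemma as stated is about $\sum_{b\sim a}b$, which the proof will show depends only on $R/\Ann_R(a)$ — so I suspect the intended reading is $D$ a single ideal $Ra$, or the lemma is applied termwise; I would state it for one ideal and remark the general case follows by additivity mod $2$), we obtain the claimed dichotomy.

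The main obstacle I anticipate is bookkeeping rather than conceptual: making the identification $\{x : Rx = Rc\} = c + \mathfrak m c$ precise (this uses $R/\mathfrak m = \F_2$ and that $x \mapsto$ its class generates iff $x \notin \mathfrak m c$ inside $Rc$, which in turn needs $\mathfrak m c$ to be the unique maximal proper submodule of the cyclic module $Rc$ — true since $Rc/\mathfrak m c$ is a nonzero $R/\mathfrak m = \F_2$-module, hence $\cong \F_2$), and pinning down that $|\mathfrak m c| = 2 \iff \mathfrak m c = \{0,e\} \iff R/\Ann_R(c)\cong \F_2[x]/x^2$. The last equivalence uses Lemma~\ref{lem:secondary_minimal} (the condition $|R/\Ann_R(c)| = 4$) together with the observation that a Frobenius local ring of order $4$ with residue field $\F_2$ is exactly $\F_2[x]/x^2$ (or $\Z/4$ — but here everything is an $\F_2$-algebra, so only $\F_2[x]/x^2$ occurs). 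I would also need the elementary fact that $\sum_{h \in H} h = 0$ for a finite abelian group $H$ of even order that is not $\Z/2$ and $=0$ for $H$ of odd order; in our $\F_2$-algebra setting $H$ is elementary abelian $2$-group, so this is just "the sum of all vectors in $\F_2^k$ is $0$ for $k \geq 2$ and is the nonzero vector for $k=1$", which is immediate by pairing $h \leftrightarrow h + v$ for any fixed $v \neq 0$.
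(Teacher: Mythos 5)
Your final computation lands on the right object, but two issues need attention. First, the relation $b \sim a$ in this lemma means ``$b$ is an associate of $a$'' (i.e.\ $Rb = Ra$), not adjacency in $G_R(D)$: the lemma is applied termwise to each ideal $Rx_i \in D$ in the proposition that follows, and under the adjacency reading the statement is false (for $R=\F_2[x,y]/(x^2,y^2)$, $D=\{R\}$, $a=xy=e$, the neighbours of $e$ sum to $0$ while $R/\Ann_R(e)\cong\F_2$ would demand $e$). You do pivot to computing $\Sigma(c)=\sum_{x:\,Rx=Rc}x$, which is exactly the correct sum, so this is a framing error rather than a fatal one. Second, and more substantively: the lemma does not assume $R$ is an $\F_2$-algebra. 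The section's standing hypothesis is only that $(R,\mathfrak m)$ is a finite local Frobenius ring with residue field $\F_2$, and the paper's own proof explicitly treats the case $2\neq 0$ in $R$. Your argument writes $\Sigma(c)=N(c)\cdot c+\sum_{h\in\mathfrak m c}h$ and kills the term $N(c)\cdot c$ for $N(c)$ even by invoking characteristic $2$; when $2\neq 0$ this fails (for $R=\Z/4$, $c=1$ one gets $N(c)\cdot c=2$ and $\sum_{h\in\mathfrak m c}h=2$, and the conclusion needs these to cancel). You flag ``or the two contributions cancel'' but never prove that the cancellation always occurs, and that is precisely the content of the missing case; also $\mathfrak m c$ need not be elementary abelian there, so the coset-sum fact you use requires the general ``sum of all elements of a finite abelian group'' statement, not the $\F_2^k$ special case. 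As written, your proof covers only $\F_2$-algebras --- enough for the cube-like application that follows the lemma, but not for the lemma as stated.

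Modulo that restriction, your route is correct and genuinely different from the paper's. The paper first handles $a\in R^\times$ by summing over all units, pairing $u\leftrightarrow -u$ when $2\neq 0$ and $u\leftrightarrow u+e$ when $2=0$ to get $\tfrac12\varphi(R)e$, and then reduces the general case to this via the identity $\sum_{b\sim a}b=a\sum_{u\in(R/\Ann_R(a))^\times}u$, after checking that $R/\Ann_R(a)$ is again local Frobenius with minimal element $e_1$ satisfying $ae_1=e$. Your decomposition $\{x:Rx=Rc\}=c+\mathfrak m c$ computes the same sum directly inside $R$, avoiding the quotient-ring detour, and your identification of the critical case $|\mathfrak m c|=2\iff\mathfrak m c=Re\iff|R/\Ann_R(c)|=4$ via \cref{lem:secondary_minimal} is correct. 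If you either prove the $2\neq 0$ cancellation or import the paper's $u\leftrightarrow -u$ pairing (applied in $R/\Ann_R(a)$, whose characteristic governs which pairing to use), the argument becomes a complete alternative proof.
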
 
\begin{proof}
    We first prove this statement for when $a \in R^{\times}.$ If $2 \neq 0 $ in $R$ then elements of $R^{\times}$ can be partitioned into pairs $\{u, -u\}$. As a result $\sum_{b \in R^{\times}} b =0.$ Let us consider the case the characteristic of $R$ is $2.$ If $R=\F_2$ then  $e=1$ and hence the statement is true. Let us consider the case $R \neq \F_2$. In this case, the elements of $R^{\times}$ can be partitioned into pairs $\{u, u+e\}.$ For each pair, the sum is $e$. Therefore 
    \[ \sum_{b \in R^{\times}} b = \frac{1}{2} \varphi(R) e. \]
    This is $0$ unless $\varphi(R)=2.$ Since the characteristic of $R$ is $2$, $\varphi(R)=2$ can only happen when $R=\F_2$ or $\F_2[x]/x^2.$

    Let us now consider the general case. First,  since $a \neq 0$, we can find $e_1$ such that $e = a e_1.$ While $e_1$ is not unique, it is unique in $R/\Ann_{R}(a).$ We claim that $Re_1$ is the minimal ideal in $R/\Ann_{R}(a)$. (This gives another proof for the fact that $R/\Ann_{R}(a)$ is a Frobenius ring. A different proof using a non-degenerate linear functional was previously discussed in \cite[Lemma 4.3]{nguyen2025gcd}.) In fact, let $u \in R/\Ann_{R}(a)$ such that $u$ is neither a unit nor $0$. Let $\hat{u}$ be a lift of $u$ to $R.$ Since $u \neq 0$, $\hat{u}a \neq 0$ and hence we can write $e= v(ua).$ We can then see that $e_1 = \hat{u}v $ in $R/\Ann_{R}(a).$ This shows that $Re_1$ is the unique minimal ideal in $R/\Ann_{R}(x).$

    By \cite[Corollary 2.7]{nguyen2025gcd} we have 
    \[ \sum_{b \sim a} b = a \sum_{u \in (R/\Ann_{R}(a))^{\times}} u. \]
    By the calculation done in the previous part, we know that modulo $\Ann_{R}(a)$

  \[
\sum_{u \in (R/\Ann_{R}(a))^{\times}} u=
\begin{cases}
e_1 =e& \text{if } R/\Ann_{R}(a) \in \{\F_2, \F_2[x]/x^2\}. \\
0 & \text{otherwise. } 
\end{cases}
\]

The statement follows  from this.     
\end{proof}
We can now relate \cref{thm:local} and \cite[Theorem 2.3]{cheung2011perfect}.

\begin{prop}
    Suppose that $R$ is an $\F_2$-algebra. Let $S$ be the generating set of $G_R(D)$ associated with $D$. Then the following statements are equivalent.
    \begin{enumerate}
        \item $|D \cap {\Isoc}^2(R)| \equiv 1 \pmod{2}.$
        \item $\sum_{s \in S} s \neq 0.$
    \end{enumerate}
    Furthermore, if one of these conditions hold, there exists PST between $0$ and $e$ at time $t=\frac{\pi}{2}.$
\end{prop}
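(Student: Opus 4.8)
The plan is to compute $\sum_{s\in S}s$ directly from \cref{lem:sum_elements} and to read off the equivalence $(1)\Leftrightarrow(2)$ from the answer; the last assertion will then be immediate from \cref{thm:local}. First I would decompose the generating set into $R^{\times}$-orbits: since $S=\{b\in R:Rb\in D\}$ is the disjoint union of the sets $\{b:Rb=\mathcal I\}$ — which in the notation of \cref{lem:sum_elements} is $\{b:b\sim a\}$ with $Ra=\mathcal I$ — as $\mathcal I$ runs over $D$, we obtain
\[ \sum_{s\in S}s \;=\; \sum_{Ra\in D}\ \sum_{b\sim a}b. \]
By \cref{lem:sum_elements} each inner sum equals $e$ when $R/\Ann_R(a)\in\{\F_2,\F_2[x]/x^2\}$ and $0$ otherwise, so the whole problem reduces to deciding, for a nonzero principal ideal $Ra$ (the only kind occurring in $D$), when $R/\Ann_R(a)$ is one of those two rings.

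The crux is the identity: for $Ra\neq 0$, one has $R/\Ann_R(a)\in\{\F_2,\F_2[x]/x^2\}$ if and only if $Ra\in\Isoc^2(R)$. Here I would argue as follows. Since $R/\Ann_R(a)\cong Ra$ as $R$-modules, $R/\Ann_R(a)\cong\F_2$ exactly when $|Ra|=2$, i.e.\ when $Ra=Re=\soc^1(R)$, which lies in $\Isoc^2(R)$. Next, $R/\Ann_R(a)$ is a local $\F_2$-algebra with residue field $\F_2$ (it is a quotient $R/\Ann_R(a)$ with $\Ann_R(a)\subseteq\mathfrak m$), so an order count shows it is isomorphic to $\F_2[x]/x^2$ if and only if $|R/\Ann_R(a)|=4$; in that case $a\neq e$ automatically, and by \cref{lem:secondary_minimal} (equivalence of conditions (4) and (5)) this happens exactly when $a\in\soc^2(R)$, i.e.\ when $Ra\in\Isoc^2(R)$. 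Putting the two cases together, $\sum_{b\sim a}b=e$ precisely when $Ra\in\Isoc^2(R)$ and is $0$ otherwise, so
\[ \sum_{s\in S}s \;=\; |D\cap\Isoc^2(R)|\cdot e. \]

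Since $R$ is an $\F_2$-algebra we have $2e=0$ while $e\neq 0$, so the right-hand side is $e$ when $|D\cap\Isoc^2(R)|$ is odd and $0$ when it is even; this is exactly the asserted equivalence $(1)\Leftrightarrow(2)$. If either condition holds then $|D\cap\Isoc^2(R)|\equiv1\pmod2$, so \cref{thm:local} produces PST between $0$ and $e$ at time $t=\frac{\pi}{2}$, which is the final claim.

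I expect the only real obstacle to be the middle step: correctly matching the three possible isomorphism types of $R/\Ann_R(a)$ with membership in $\Isoc^2(R)$, and checking that the degenerate ideals (the zero ideal, which is excluded from $D$, and $Re$ itself, which must be counted) are handled consistently with the hypothesis $a\neq e$ in \cref{lem:secondary_minimal}. Everything after that is characteristic-$2$ bookkeeping together with a single application of \cref{lem:sum_elements} and \cref{thm:local}.
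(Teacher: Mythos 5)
Your proposal is correct and follows essentially the same route as the paper's own proof: decompose $\sum_{s\in S}s$ over the $R^{\times}$-orbits indexed by $D$, evaluate each orbit sum via \cref{lem:sum_elements}, identify the nonvanishing orbits with membership in $\Isoc^2(R)$ via \cref{lem:secondary_minimal}, and conclude with \cref{thm:local}. Your extra care in treating the case $Ra=Re$ separately (where the hypothesis $a\neq e$ of \cref{lem:secondary_minimal} does not apply) is a point the paper passes over silently, but it does not change the argument.
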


\begin{proof}
By the definition of $S$,
\[ \sum_{s \in S} s = \sum_{i=1}^d \left[\sum_{b \sim x_i} b  \right]. \]
By \cref{lem:sum_elements}, the sum 
\[ \sum_{b \sim x_i} b = 
\begin{cases}
e & \text{if } R/\Ann_{R}(x_i) \in \{\F_2, \F_2[x]/x^2\}, \\
0 & \text{otherwise. } 
\end{cases} 
\]     

Furthermore, \cref{lem:secondary_minimal} implies that $R/\Ann_{R}(x_i) \in \{\F_2, \F_2[x]/x^2\}$ if and only if $x_i \in \soc^2(R).$ We conclude that 
\[ \sum_{s \in S} s = |D \cap \Isoc^2(R)|e. \] 
This is nonzero if and only if $|D \cap \Isoc^2(R)|$ is odd. 
\end{proof}

In all the examples that we consider, the converse of \cref{thm:local} always holds. More specifically, in the case where $|D \cap \text{Isoc}^2(R)| \equiv 0 \pmod{2}$, we can always find $a \in \mathfrak{m}$ such that $\lambda_a = \lambda_1$ (note that $\lambda_1$ is $-1$ or $0$ depending on whether $Re \in D$ or not). It would be interesting to prove or disprove this statement.

\begin{rem}

We note that since the complete graph $K_n$ has no PST unless $n=2$, the converse of \cref{thm:local} would imply that if $R \neq \F_2$, $|\Isoc^2(R)|$ must be an even number (otherwise, if we take $D$ to be the union of all principal ideals in $R$, the converse of \cref{thm:local} would imply that the complete graph $K_{|R|}$ has PST). Fortunately, we can prove this statement unconditionally. Let $Re_1, Re_2, \ldots, Re_k$ be the elements of $\Isoc^2(R)\setminus \{Re\}.$ Then for $Re_i$ we have exactly $4$ elements. In addition, for $i \neq j$, $|Re_i \cap Re_j| = |Re|=2$. We conclude that 
\[ |\soc^2(R)| = |Re|  + 2k = 2+2k. \]
When $R \neq \F_2$, $\soc^{1}(R)$ is a proper subgroup of $\soc^2(R).$ As a result, $4 \mid \soc^2(R).$ This implies that $k$ must be odd. In other words, $|\Isoc^2(R)|$ must be even.  
\end{rem}
\section{\textbf{PST on unitary Cayley graphs}} \label{sec:unitary}
We recall that for a finite ring $R$, the unitary Cayley graph $G_{R}(1)$ associated with $R$ is precisely the gcd-graph $G_{R}(\{R\}).$ In other words, it is the graph whose vertex set is $R$ and two vertices $a,b$ are adjacent if $a-b \in R^{\times}.$ In this section, we classify all finite Frobenius rings whose associated unitary Cayley graph has PST. We remark that various authors study a similar problem in the context of discrete-time quantum walks (for example, see \cite{bhakta2025state}). Thus, our results provide new insights that are independent of their findings. Additionally, after completing this article, we learned via MathSciNet that \cref{thm:unitary} has been obtained in \cite[Theorem 2.5]{pst-gcd-new} using a different approach, namely via matrix analysis. Our approach discussed here is more number-theoretic in nature, and we hope that it will provide a new toolbox for researchers in the field. For this reason, we have decided to retain this section.

We recall that the spectrum of $G_{R}(1)$ is the multiset
\[ \left\{ \lambda_g = c(g, R) \right \} = \left\{ \frac{\varphi(R)}{\varphi(R/\Ann_{R}(g))} \mu(R/\Ann_{R}(g)) \right \}.\]
Let $R=(\prod_{i=1}^d S_i) \times R_2$ be the Artin-Wedderburn decomposition of $R$ as described in \cref{prop:upper_bound}. Since $\mu$ and $\varphi$ are multiplicative, if we write $g=(g_1, g_2, \ldots, g_d, h)$ then 
\[ \lambda_{g} = c(h, R_2) \prod_{i=1}^d c(g_i, S_i). \]

Recall that $\Delta$  is the abelian subgroup of $(R,+)$ generated by $r_1-r_2$ where $\lambda_{r_1}=\lambda_{r_2}$. 
\begin{prop}
\noindent
\begin{enumerate}
    \item  Suppose that there are at least two local factors $T$ of $R$ such that $\mu(T)=0.$ Then $\Delta = R.$
    \item Let $R=(\prod_{i=1}^d S_i) \times R_2$ as described above. If $\mu(R_2) = 0$ then $\Delta= R.$
\end{enumerate}
\end{prop}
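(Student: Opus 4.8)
The goal is to show that under either hypothesis the group $\Delta$ generated by differences $r_1-r_2$ with $\lambda_{r_1}=\lambda_{r_2}$ is all of $R$. My plan is to exploit the product formula
\[ \lambda_{g} = c(h, R_2) \prod_{i=1}^d c(g_i, S_i) \]
together with the fact that a Ramanujan sum $c(g,T)$ vanishes precisely when $\Ann_T(g)$ is a proper ideal of the maximal ideal of the local ring $T$ (equivalently, when $\mu(T/\Ann_T(g)) = 0$), which happens exactly when $\mu(T)=0$ is ``witnessed'' by some $g$. The key observation is that if one coordinate of $\lambda_g$ is forced to be $0$, then $\lambda_g=0$ regardless of the other coordinates, so a huge collection of elements share the eigenvalue $0$, and their differences should sweep out everything.

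For part (1), suppose $T=S_{i_0}$ and $T'=S_{i_1}$ (or $T'=R_2$) are two local factors with $\mu(T)=\mu(T')=0$; in a local ring $\mu=0$ means the maximal ideal is nonzero, so there is a nonzero element $m\in\mathfrak{m}_T$, and $c(0,T)=\varphi(T)\neq 0$ while more importantly $c(m',T)=0$ for a suitable $m'$ — in fact I would argue that for any local ring $T$ with $\mu(T)=0$ there are elements $g$ with $c(g,T)=0$: take $g$ generating $\mathfrak{m}_T$, so $\Ann_T(g)\subsetneq \mathfrak{m}_T$ and the sum vanishes. Then given an arbitrary target element $\rho = (\rho_1,\dots,\rho_d,\sigma)\in R$, I want to write $\rho$ as a difference $r_1-r_2$ of two elements both having $\lambda=0$. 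The trick: choose $r_1$ to agree with $\rho$ in coordinate $i_0$ and with any vanishing element in coordinate $i_1$ (so $\lambda_{r_1}=0$), and $r_2$ to agree with $0$ in coordinate $i_0$ and also be vanishing in some coordinate (so $\lambda_{r_2}=0$), arranging all other coordinates so that $r_1-r_2=\rho$. The point is that we have \emph{two independent} coordinates we can use to kill the eigenvalue, so we have enough freedom: set $r_1$ and $r_2$ to have a vanishing entry in coordinate $i_1$ and $i_0$ respectively, while their difference in every coordinate equals $\rho$'s entry. Since in coordinate $i_0$ we need $r_1 - r_2$ to equal $\rho_{i_0}$ while $r_2$'s $i_0$-entry must be $0$ and $r_1$'s $i_1$-entry vanishing, and symmetrically, this is a routine system to solve coordinate-by-coordinate; each single coordinate only has to satisfy one constraint at a time except $i_0,i_1$, and there we use the second factor to absorb slack. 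So $\rho\in\Delta$ and $\Delta=R$.

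For part (2), the same idea is easier because $R_2$ is a distinguished factor: if $\mu(R_2)=0$ then for \emph{every} tuple $(g_1,\dots,g_d)$ and every $h$ with $c(h,R_2)=0$ we get $\lambda_{(g_1,\dots,g_d,h)}=0$. Pick $h_0\in R_2$ with $c(h_0,R_2)=0$ (exists as above). Then given arbitrary $\rho=(\rho_1,\dots,\rho_d,\sigma)$, write $r_1 = (\rho_1,\dots,\rho_d,h_0)$ and $r_2=(0,\dots,0,h_0-\sigma)$; I need $c(h_0-\sigma,R_2)=0$ too, which need not hold for arbitrary $\sigma$, so instead I use a one-more-element argument: the set of $h\in R_2$ with $c(h,R_2)=0$ is exactly $R_2\setminus(\text{units}\cup\mathfrak{m}_{R_2}$-minimal stuff$)$ — concretely it is $\{h : \Ann_{R_2}(h)\subsetneq \mathfrak{m}_{R_2}\}$, which is a union of cosets large enough that it, minus a translate of itself, covers $R_2$; alternatively, since $\mathfrak{m}_{R_2}\neq 0$, for any $\sigma$ I can find $h$ with both $c(h,R_2)=0$ and $c(h-\sigma,R_2)=0$ by a counting/pigeonhole argument on $|R_2|$ versus the number of bad $h$. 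Then $r_1-r_2=\rho$ and both have $\lambda=0$, giving $\Delta=R$.

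\textbf{Main obstacle.} The delicate point is the counting argument ensuring we can pick witnessing elements $h$ (or coordinate values) such that \emph{both} $c(h,T)=0$ and $c(h-\sigma,T)=0$ simultaneously for a prescribed $\sigma$; this requires knowing that the set of ``eigenvalue-zero witnesses'' in a local ring with $\mu=0$ is more than half the ring, or at least that it is not contained in a coset. I expect this follows from $|\{h : \Ann_T(h) \subsetneq \mathfrak m_T\}| = |T| - |T/\mathfrak{m}_T|\cdot(\text{something}) \geq |T|/2$ when $\mathfrak{m}_T\neq 0$, but making this precise — or, in part (1), verifying the coordinate-by-coordinate system is always solvable — is where the real work lies; everything else is bookkeeping on the product formula.
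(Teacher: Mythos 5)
Your overall strategy---writing an arbitrary element as a difference of two elements whose eigenvalue is forced to be $0$ by a single vanishing coordinate in the product formula $\lambda_g=c(h,R_2)\prod_i c(g_i,S_i)$---is exactly the paper's, but there are two concrete problems. First, your proposed witness for $c(g,T)=0$ is wrong: taking $g$ a generator of $\mathfrak{m}_T$ gives $\Ann_T(g)=\Ann_T(\mathfrak{m}_T)=\soc(T)$, and when $\mathfrak{m}_T$ coincides with the socle this is $\mathfrak{m}_T$ itself, so the Ramanujan sum does \emph{not} vanish; e.g.\ for $T=\Z/4$, $g=2$ one gets $c(2,\Z/4)=-2$. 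The witness you actually want is any \emph{unit} $u$: then $\Ann_T(u)=0$ and $c(u,T)=\mu(T)=0$ precisely under the hypothesis $\mu(T)=0$. Once you take the witness to be $1$, part (1) collapses to the paper's explicit identity $(t_1,t_2,t_3)=(1,\,t_2+1,\,t_3)-(1-t_1,\,1,\,0)$, where the first vector is killed by the unit in the $T_1$-slot and the second by the unit in the $T_2$-slot; there is no ``system to solve'' and no counting left over.

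Second, the counting step you yourself flag as the main obstacle in part (2) is a genuine gap as written: you never verify that $Z=\{h\in R_2: c(h,R_2)=0\}$ meets its translate $\sigma+Z$ for every $\sigma$. (It does happen to hold here, because the non-field local factor of $R_2$ has residue field of size $q\ge 3$, whence $|Z|>|R_2|/2$; but the analogous statement fails for a factor with residue field $\F_2$ --- in $\Z/4$ the zero-set is the set of units and its translate by a unit is disjoint from it --- so this is not a formality you can wave at.) The paper sidesteps the issue entirely by using the \emph{same} witness $h=1$ in both elements, so that $(a,1)-(0,1)=(a,0)$ yields only $\bigl(\prod_i S_i\bigr)\times\{0\}\subseteq\Delta$, and then supplies the missing $\{0\}\times R_2$ from \cref{prop:Delta}, which gives $J(R_1)\times R_2\subseteq\Delta$ unconditionally; you never invoke that result. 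Either carry out your count for $R_2$ honestly, or adopt the paper's two-step decomposition.
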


\begin{proof}
Let us prove the first statement. Let us write $R=T_1 \times T_2 \times T_3$ where $T_1, T_2$ are local rings such that $\mu(T_1)=\mu(T_2)=0.$ Let $t=(t_1, t_2, t_3)$ be an arbitrary element in $R.$ We have 
\[ c(1, t_2+1, t_3) = c(1, T_1) c(t_2+1, T_2) c(t_3, T_3)=0.\]
Similarly $c(1-t_1, 1, 0)=0.$ This shows that $(t_1, t_2, t_3)=(1, t_2+1, t_3)-(1-t_1, 1, 0) \in \Delta.$ This shows that $\Delta = R.$

Let us now prove the second statement. For each $a \in \prod_{i=1}^d S_i$ we have $c((a, 1),R)=c((0,1),R)=0.$ Consequently $(a,0) \in \Delta$ since $(a,0) =(a,1)-(0,1) \in \Delta$. Since $J(\prod_{i=1}^d S_i) \times R_2 \subset \Delta$, we know that $(a,b) \in \Delta$ for all $a \in \prod_{i=1}^d S_i$ and $b \in R_2.$ In other words, $\Delta = R.$
    \end{proof}
By \cref{cor:orthogonal}, we have the following. 
\begin{cor}
Suppose that there exists PST on $G_{R}(1).$ Then the following conditions hold. 
\begin{enumerate}
    \item $R$ has at most one local factor $T$ such that $\mu(T)=0.$
    \item $\mu(R_2) \neq 0.$
\end{enumerate}
In other words, $R$ must have the form $S_1 \times \prod_{i=1}^{d-1} \F_2 \times \prod_{i=1}^r \F_{q_i} $ where $S_1$ is a Frobenius local ring with residue field $\F_2$ and $\F_{q_i}$ are finite fields with $q_i \geq 3.$
\end{cor}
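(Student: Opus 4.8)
The plan is to read this corollary off the Proposition immediately above together with \cref{cor:orthogonal}, with no new computation required. First I would record the standard fact --- already implicit in the proof of \cref{lem:congruence} --- that $\mu$ is multiplicative along the Artin--Wedderburn decomposition and that, for a finite local ring $(T,\mathfrak m_T)$, one has $\mu(T)\neq 0$ if and only if $T$ is a field; equivalently, $\mu(R)\neq 0$ precisely when $R$ is reduced. (This can also be cited from \cite{nguyen2025gcd}.)

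Next, suppose $G_R(1)$ has PST, necessarily between $0$ and some $s$ with $s\neq 0$, since the two vertices are distinct. By \cref{cor:orthogonal}, $\psi(sd)=0$ for every $d\in\Delta$, so $s\Delta\subseteq\ker\psi$. If we had $\Delta=R$ this would give $sR\subseteq\ker\psi$, and non-degeneracy of $\psi$ would force $sR=0$, hence $s=1\cdot s=0$, a contradiction. Therefore $\Delta\neq R$, and the contrapositive of the preceding Proposition yields exactly the two asserted conditions: $R$ has at most one local factor $T$ with $\mu(T)=0$, and $\mu(R_2)\neq 0$.

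Finally I would translate these into the structural description. By the fact recalled above, $\mu(R_2)\neq 0$ means every local factor of $R_2$ is a field; since these factors have residue fields different from $\F_2$ by construction, $R_2\cong\prod_{i=1}^r\F_{q_i}$ with each $q_i\geq 3$. Likewise ``$\mu(T)=0$'' for a local factor is just ``$T$ is not a field'', so condition (1) says at most one local factor of $R$ fails to be a field, while condition (2) places all of $R_2$'s factors among the fields; hence that single exceptional factor --- if it exists --- must be one of the local factors with residue field $\F_2$. Denoting it $S_1$ (allowing $S_1=\F_2$) and writing the remaining residue-field-$\F_2$ factors as $d-1$ copies of $\F_2$, we conclude $R\cong S_1\times\prod_{i=1}^{d-1}\F_2\times\prod_{i=1}^r\F_{q_i}$, as claimed. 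The argument is essentially mechanical: the only steps that need a line of justification are the characterization $\mu(T)=0\iff T$ is not a field, and the observation that condition (2) is what forces the lone exceptional local factor to have residue field $\F_2$ rather than sitting inside $R_2$.
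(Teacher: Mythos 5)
Your proposal is correct and follows exactly the route the paper intends: the paper derives this corollary in one line from \cref{cor:orthogonal} together with the preceding proposition, and your argument (PST between $0$ and $s\neq 0$ forces $\psi(s\Delta)=0$, so $\Delta\neq R$ by non-degeneracy of $\psi$, whence the contrapositive of the proposition gives conditions (1) and (2), which translate into the stated product decomposition via $\mu(T)\neq 0\iff T$ is a field) is precisely the omitted detail. No gaps.
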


We can impose further restrictions on $G_{R}(1)$ when there exists PST on it. In fact, $G_{R}(1)$ has exactly $2^{d-1}$ connected components, each of which is isomorphic to $G_{S_1 \times \prod_{i=1}^r \F_{q_i} }(1).$ Since PST cannot exist between vertices in different connected components, we can safely assume that $d=0$ and $R=S_1 \times \prod_{i=1}^{r} \F_{q_i}$ where $(S_1, \mathfrak{m}_1)$ is a local ring with residue field $\F_2.$

\begin{thm} \label{thm:unitary}
    Suppose that there exists PST on $G_{R}(1)$ with $R=S_1 \times \prod_{i=1}^r \F_{q_i}$. Then the following conditions must hold. 
\begin{enumerate}
    \item $S_1 \in \{\F_2, \Z/4, \F_2[x]/x^2 \}.$
    \item $q_i \equiv 0 \pmod{4}$ for all $i.$
\end{enumerate}
Conversely, suppose that these conditions hold. Then there exists PST between $(0,0)$ and $(e_1, 0)$ at time $\frac{\pi}{2}.$ Here $e_1$ is the minimal element of $S_1.$
\end{thm}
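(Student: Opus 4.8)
The plan is to exploit the product decomposition $R = S_1 \times \prod_{i=1}^r \F_{q_i}$ together with \cref{prop:condition}, and to reduce the analysis of $G_R(1)$ to the already-studied local case via the multiplicativity $\lambda_{(g_1, \ldots, g_{r+1})} = \prod c(g_i, \text{factor}_i)$. For the necessity direction, I would first use \cref{prop:upper_bound} to locate the (unique) target: PST can only go from $(0,0)$ to $s = (e_1, 0)$ where $e_1$ is the minimal element of $S_1$. The key computational inputs are the Ramanujan sums of a finite field $\F_q$, namely $c(0, \F_q) = q-1$ and $c(g, \F_q) = -1$ for $g \neq 0$, and for the local ring $S_1$ the values $c(0, S_1) = \varphi(S_1)$, $c(u, S_1) = \mu(S_1)$ for $u \in S_1^\times$, and $c(x, S_1)$ for $x \in \mathfrak{m}_1 \setminus \{0\}$ — which by \cref{lem:congruence}-type reasoning lie in $\{0, \pm\varphi(S_1/\Ann(x))\}$. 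For condition (2), I would test \cref{prop:condition} with $r_1, r_2$ differing only in one field coordinate: taking $r_1 = 0$ and $r_2 = (0, \ldots, 0, 1_i, 0, \ldots, 0)$ gives $\lambda_{r_1} = (q_i - 1)\prod_{j \neq i}(\cdots)$ and $\lambda_{r_2} = -\prod_{j \neq i}(\cdots)$ (all other factors the same), so $\lambda_{r_1} - \lambda_{r_2} = q_i \cdot \prod_{j \neq i}(\text{other Ramanujan sums})$; meanwhile $\psi(s(r_1 - r_2))$ involves only the $S_1$-coordinate of $s$, which is $e_1 \neq 0$, paired against a unit in that coordinate's complement — one must check this produces a nonzero contribution forcing $t/(2\pi)$ to interact with $q_i$. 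Combined with condition (1) applied at $r \in \mathfrak{m}$ of $S_1$, which pins down $\varphi(S_1) \in \{1, 2\}$ (as in \cref{thm:local}'s proof: $\lambda_0 \equiv \varphi(S_1) \pmod{4}$ up to units, and the half-integer shift from $\psi(e_1)/n = 1/2$ forces $\varphi(S_1) \le 2$), one gets $S_1 \in \{\F_2, \Z/4, \F_2[x]/x^2\}$. The divisibility $4 \mid q_i$ then falls out of requiring $\lambda_{r_1} - \lambda_{r_2}$ times $t/(2\pi)$ to be an integer simultaneously with the half-integer condition from the $S_1$ side, at a common time $t = \pi/2$.

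For the converse, assume $S_1 \in \{\F_2, \Z/4, \F_2[x]/x^2\}$ and $4 \mid q_i$ for all $i$. I would verify \cref{prop:condition} directly at $t = \pi/2$, i.e. check that $\frac{\lambda_{r_1} - \lambda_{r_2}}{4} + \frac{\psi((e_1,0)(r_1 - r_2))}{n} \in \Z$ for all $r_1, r_2$. Write $r = (g; h_1, \ldots, h_r)$ with $g \in S_1$. Since $\psi$ splits over the product and $e_1$ lives in $S_1$, $\psi((e_1, 0) \cdot r) = \psi_{S_1}(e_1 g)$, which equals $0$ if $g \in \mathfrak{m}_1$ (because $\Ann(e_1) = \mathfrak{m}_1$) and $n_1/2 \pmod{n_1}$ if $g \in S_1^\times$, by \cref{lem:local}. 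So the $\psi$-term is $0$ when $g \in \mathfrak{m}_1$ and $1/2$ when $g$ is a unit, exactly as in \cref{prop:simplified_condition}. It remains to show $\lambda_r \equiv \lambda_0 \pmod 4$ when $g \in \mathfrak{m}_1$ and $\lambda_r \equiv \lambda_0 + 2 \pmod 4$ when $g \in S_1^\times$ — wait, more precisely one needs $\lambda_{r_1} \equiv \lambda_{r_2} \pmod 4$ when $r_1, r_2$ have units (resp. non-units) in the $S_1$-slot, and $\lambda_{r_1} - \lambda_{r_2} \equiv 2 \pmod 4$ when exactly one does. Here the hypothesis $4 \mid q_i$ enters: each field factor contributes $c(h_i, \F_{q_i}) \in \{q_i - 1, -1\}$, and since $q_i \equiv 0 \pmod 4$ these are both $\equiv -1 \pmod 4$, so the product over field factors is $\equiv (-1)^r \pmod 4$ regardless of the $h_i$. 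Thus $\lambda_r \equiv (-1)^r c(g, S_1) \pmod 4$, and the problem genuinely reduces to the local ring $S_1$: one checks case by case for $S_1 = \F_2$ ($c(0) = 1$, $c(1) = -1$, difference $= 2$), $S_1 = \Z/4$ ($c(0) = 2$, $c(2) = -2 \equiv 2$, $c(1) = c(3) = 0$), and $S_1 = \F_2[x]/x^2$ ($c(0) = 2$, $c(x) = -2 \equiv 2$, $c(1) = c(1+x) = 0$) that the required congruences hold, which is precisely the content of \cref{thm:local} specialized to these three chain rings.

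The main obstacle I anticipate is bookkeeping the interaction between the two conditions of \cref{prop:condition} at a \emph{single} time $t$: individually, condition (1) forces $t/(2\pi) \cdot \gcd$-type quantities to be integral and condition (2) forces a half-integer offset, and one must check these are compatible and that $t = \pi/2$ works — i.e. that all differences $\lambda_{r_1} - \lambda_{r_2}$ arising with a unit/non-unit split in the $S_1$-coordinate are $\equiv 2 \pmod 4$ while all others are $\equiv 0 \pmod 4$. The cleanest route is to first prove the reduction $\lambda_r \equiv (-1)^r c(g, S_1) \pmod 4$ (using $4 \mid q_i$ and that all $S_1/\Ann(x)$ for $x \in \mathfrak{m}_1$, $x \ne 0$ in these three rings are themselves $\F_2$ or have order dividing issues handled by direct inspection since $\dim S_1 \le 2$), and then quote \cref{thm:local} or just re-verify the three tiny cases by hand. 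A secondary subtlety: one should confirm that in the necessity direction the witness $r_2$ chosen to force $4 \mid q_i$ indeed yields a nonzero $\psi$-pairing — this requires picking $r_1, r_2$ that differ in a field coordinate while being units in the $S_1$-coordinate, so the $\psi$-term is $1/2$ and cannot absorb the $q_i$-divisibility failure; handling the case where $S_1 = \F_2$ (so "unit in $S_1$-coordinate" and "non-unit" nearly coincide) needs a tiny separate check but is routine.
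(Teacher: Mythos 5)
Your converse direction is correct and essentially the paper's argument: reduce mod $4$ using $q_i\equiv 0\pmod 4$ to get $\lambda_r\equiv (-1)^r c(g,S_1)\pmod 4$, then check the three small rings at $t=\pi/2$. The derivation of $q_i\equiv 0\pmod 4$ is also salvageable (your witness pair $r_1=0$, $r_2=(0,\dots,1_i,\dots,0)$ has $\psi$-pairing $0$, not a ``nonzero contribution'' as you write, but that is exactly what you want: it yields $q_i\cdot(\text{odd})\cdot\varphi(S_1)\cdot\tfrac{t}{2\pi}\in\Z$, and combining with the separately forced condition $2\tfrac{t}{2\pi}+\tfrac12\in\Z$, i.e.\ $\tfrac{t}{2\pi}$ an odd multiple of $\tfrac14$, gives $q_i$ even, hence $4\mid q_i$ since $q_i\geq 3$ is a prime power). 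Note, though, that your parenthetical ``being units in the $S_1$-coordinate, so the $\psi$-term is $1/2$'' is backwards: if both $S_1$-coordinates are units their difference lies in $\mathfrak{m}_1=\Ann_{S_1}(e_1)$, so the $\psi$-term is $0$; it is $1/2$ precisely when exactly one coordinate is a unit.

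The genuine gap is in the necessity of $S_1\in\{\F_2,\Z/4,\F_2[x]/x^2\}$. Your proposed mechanism --- ``$\lambda_0\equiv\varphi(S_1)\pmod 4$ up to units, and the half-integer shift forces $\varphi(S_1)\le 2$'' --- does not work: no single congruence of that type bounds $\varphi(S_1)$. For example, with $S_1=\Z/8$ the pair $r_1=0$, $r_2=(1,0,\dots,0)$ gives $(\varphi(S_1)-0)\prod_j(q_j-1)\tfrac{t}{2\pi}+\tfrac12=4\cdot(\text{odd})\cdot\tfrac{t}{2\pi}+\tfrac12\in\Z$, which is perfectly satisfiable (take $\tfrac{t}{2\pi}=\tfrac18$). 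The obstruction when $|S_1|>4$ is instead an eigenvalue \emph{collision}: if $\mathfrak{m}_1\neq S_1e_1$, pick $x\in\mathfrak{m}_1\setminus S_1e_1$; then $\Ann_{S_1}(x)\neq\mathfrak{m}_1$, so $\mu(S_1/\Ann_{S_1}(x))=0=\mu(S_1)$ and $\lambda_{(x,0,\dots,0)}=\lambda_{(1,0,\dots,0)}=0$, while $x-1$ is a unit so the $\psi$-pairing with $(e_1,0)$ is $\tfrac12$ by \cref{lem:local}; \cref{prop:condition} then gives $0\cdot\tfrac{t}{2\pi}+\tfrac12\in\Z$, a contradiction. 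Hence $\mathfrak{m}_1=S_1e_1$, so $\mathfrak{m}_1^2=0$, and \cref{lem:secondary_minimal} (applied to $a=1$) forces $|S_1|=4$. Without this collision argument your proof of condition (1) does not go through.
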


\begin{proof}
    Let us prove the necessary conditions. Suppose that there exists PST between $0$ and some $s \in R.$ By \cref{prop:upper_bound}, $s$ must equal to $(e_1, 0, \ldots, 0).$ In this case, the conditions in \cref{prop:condition} can be described explicitly as 
\begin{equation} \label{eq:conditions}
    \begin{cases}
    (\lambda_{(x_1, a_1, \ldots, a_r)} - \lambda_{(x_2, b_1, \ldots, b_r)}) \frac{t}{2 \pi} + \frac{1}{2} \in \Z  \text{if } (x_1-x_2) \not \in \mathfrak{m}_1 \\ 
    (\lambda_{(x_1, a_1, \ldots, a_r)} - \lambda_{(x_2, b_1, \ldots, b_r)}) \frac{t}{2 \pi}  \in \Z  \text{if  } (x_1-x_2)  \in \mathfrak{m}_1.
\end{cases} 
\end{equation}
This follows from the fact that $\frac{\psi((e_1,0, \ldots, 0))}{n}= \frac{1}{2}$ as shown in \cref{lem:local}. We also note that 
\begin{align*} 
\lambda_{(x_1, a_1, \ldots, a_r)} &= \frac{\varphi(S_1)}{\varphi(S_1/\Ann_{S_1}(x_1))} \mu(S_1/\Ann_{S_1}(x_1)) \prod_{i=1}^r \frac{\varphi(\F_{q_i})}{\varphi(\F_{q_i}/\Ann_{\F_{q_i}}(a_i))} \mu(\F_{q_i}/\Ann_{\F_{q_i}}(a_i)) \\ 
&= \frac{\varphi(S_1)}{\varphi(S_1/\Ann_{S_1}(x_1))} \mu(S_1/\Ann_{S_1}(x_1)) \prod_{a_i =0} (q_i-1) \prod_{a_i \neq 0 }(-1) \\
&= \frac{\varphi(S_1)}{\varphi(S_1/\Ann_{S_1}(x_1))} \mu(S_1/\Ann_{S_1}(x_1)) (-1)^r \prod_{a_i =0} (1-q_i).
\end{align*}

First, we claim that $S_1 \in \{\F_2, \Z/4, \F_2[x]/x^2 \}.$ If $S_1=\F_2$ then we are done. Therefore, to avoid tautology, we will assume that $S_1 \neq \F_2.$ We first claim that $\mathfrak{m}_1=S_1e_1$. In fact, suppose that it is not the case. Then we can find $x \in \mathfrak{m}_1 \setminus Re$ (since we are assuming that $S_1 \neq \F_2$, $S_1e_1 \subset \mathfrak{m}_1).$ We then have $\Ann_{S_1}(x) \neq \mathfrak{m}_1$ and hence $\lambda_{(x,0,\ldots,0)}=0.$ Similarly, we also have $\lambda_{(1,0,\ldots, 0)}=0$. These two equalities contradict the first condition. We conclude that $\mathfrak{m}_1 = S_1 e_1.$ This would imply that $\mathfrak{m}_1^2=0$ and hence $1 \in \Ann_{S_1}(\mathfrak{m}_1^2).$ By \cref{lem:secondary_minimal}, we conclude that $|S_1|=4$ and hence $S_1 \in \{ \Z/4$, $\F_2[x]/x^2 \}.$ 

Next, we claim that $q_i \equiv 0 \pmod{4}$ for each $1 \leq i \leq r.$

\textbf{Case 1.} Let's first consider the case $S_1=\F_2.$ We have $\lambda_{(1, 1, 1,\ldots, 1)}=(-1)^{r+1}$,  $\lambda_{(0,1,\ldots, 1)} =(-1)^r$, and $\lambda_{(1, 0, 1, \ldots, 1)}=(-1)^{r+1}(1-q_1).$ The first condition then implies that $2(\frac{t}{2 \pi})+\frac{1}{2} \in \Z.$ The second condition then implies that $q_1 \frac{t}{2 \pi} \in \Z.$ This condition necessarily implies that $q_1$ must be even. By the same argument, $q_i$ is even for all $1 \leq i \leq r.$

\textbf{Case 2.} Let us consider the case $S_1 \in \{\Z/4, \F_2[x]/x^2 \}.$ In this case $\lambda_{(e, 1, 1,\ldots, 1)}=2(-1)^{r+1}$, $\lambda_{(1,1,\ldots, 1)}=0$,  and $\lambda_{(e,0,1\ldots,1)}= 2(-1)^r (1-q_1).$ The first condition would imply that $2(\frac{t}{2 \pi})+\frac{1}{2} \in \Z.$ The second condition implies that $2(-1)^{r+1}(1-q_1) \frac{t}{2 \pi} \in \Z. $ We conclude that $q_1$ must be even. By the same argument, $q_i$ are even for all $1 \leq i \leq r.$

Conversely, suppose that all the above conditions are satisfied. We claim that there exists PST between $0$ and $(e_1, 0, \ldots, 0)$ at time $\frac{\pi}{2}.$ We consider two separate cases as above. 

In the first case, namely when $S_1 = \F_2$, we can see that $\lambda_{(1,a_1, a_2, \ldots, a_r)} \equiv (-1)^{r+1} \pmod{4}$ and $\lambda_{(0, a_1, \ldots, a_r)} \equiv (-1)^r \pmod{4}$ for every $(a_1, a_2, \ldots, a_r) \in \prod_{i=1}^r \F_{q_i}.$ We can check that the conditions described in \cref{eq:conditions} are satisfied when $t = \frac{\pi}{2}.$

In the second case where $S_1 \in \{\Z/4, \F_2[x]/x^2\}$ we can see that 
\[ \lambda_{(0,a_1, \ldots, a_r)} \equiv \lambda_{(e_1, a_1, \ldots, a_r)} \equiv 2(-1)^r \pmod{4} .\] 
On the other hand, if $u \in S_1^{\times}$ then $\lambda_{(u_1, a_1, \ldots, a_r)} \equiv 0 \pmod{4}$. These congruences show that all conditions described by \cref{eq:conditions} are met when $t = \frac{\pi}{2}.$
\end{proof}

When $R$ is a finite quotient of $\F_q[x]$, it is known that $R$ is a Frobenius ring (see \cite[Corollary 6.8]{minavc2024gcd}). In this case, \cref{thm:unitary} implies the following  corollary. 

\begin{cor}
    Let $f \in \F_q[x]$ be a monic polynomial. Let $G_{f}(1):=G_{\F_q[x]/f}(1)$ be the unitary Cayley graph associated with $\F_q[x]/f.$ Then there exists PST on $G_{f}(1)$ if and only if the following conditions hold 
    \begin{enumerate}
        \item $q=2$.
        \item Let $f=x^a (x+1)^b g(x)$. Then $g$ is squarefree, $\gcd(g, x(x+1))=1$, $2 \geq \max\{a,b\} \geq 1$, and $(a,b) \neq (2,2).$
    \end{enumerate}
\end{cor}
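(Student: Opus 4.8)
The plan is to apply \cref{thm:unitary} to the ring $R = \F_q[x]/f$ and translate the ring-theoretic conditions there into divisibility conditions on $f$. The first step is to recall the structure of $R$: by the Chinese Remainder Theorem, if $f = \prod_j p_j(x)^{e_j}$ is the factorization of $f$ into distinct monic irreducibles over $\F_q$, then $R \cong \prod_j \F_q[x]/p_j(x)^{e_j}$, and each factor $\F_q[x]/p_j^{e_j}$ is a finite chain ring — hence a local Frobenius ring — with residue field $\F_{q^{\deg p_j}}$ of size $q^{\deg p_j}$. So the Artin--Wedderburn decomposition of $R$ is governed directly by the irreducible factorization of $f$, and a factor has residue field $\F_2$ precisely when $q = 2$ and the corresponding irreducible is $x$ or $x+1$. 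I would first dispose of the case $q \neq 2$: by \cref{cor:no PST} (or equivalently because then $R$ is an $\F_q$-algebra with $q \neq 2$), there is no PST, which forces $q = 2$; this gives condition (1).

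Assume now $q = 2$ and write $f = x^a (x+1)^b g(x)$ with $\gcd(g, x(x+1)) = 1$. The local factors of $R$ with residue field $\F_2$ are exactly $\F_2[x]/x^a$ (when $a \geq 1$) and $\F_2[x]/(x+1)^b$ (when $b \geq 1$); the remaining local factors are $\F_2[x]/p_j^{e_j}$ with $\deg p_j \geq 2$, which have residue field $\F_{2^{\deg p_j}} \neq \F_2$. To match the hypotheses of \cref{thm:unitary} I need $R$ to have at most one local factor with residue field $\F_2$ — but actually \cref{thm:unitary} is stated for $R = S_1 \times \prod_i \F_{q_i}$ with a single such local factor $S_1$; the preceding discussion in the paper explains that in general $G_R(1)$ splits into $2^{d-1}$ isomorphic connected components when there are $d$ factors with residue field $\F_2$, and PST on $G_R(1)$ is equivalent to PST on one component, which is the unitary Cayley graph of $S_1 \times (\text{non-}\F_2 \text{ local factors})$ after collapsing the extra $\F_2$'s. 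So the first thing to extract is: for PST we need each non-$\F_2$ local factor $\F_2[x]/p_j^{e_j}$ to contribute a genuine \emph{field}, i.e.\ $e_j = 1$, by the condition ``$q_i \equiv 0 \pmod 4$ for all $i$'' combined with the requirement (from the corollary before \cref{thm:unitary}) that $R_2 = \prod_i \F_{q_i}$ — a non-field local factor $T$ with residue field $\neq \F_2$ has $\mu(T) = 0$ and is excluded. Thus $g$ must be squarefree. Then $q_i = 2^{\deg p_j}$, and $q_i \equiv 0 \pmod 4$ is equivalent to $\deg p_j \geq 2$, which is automatic since $p_j \nmid x(x+1)$. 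So condition $q_i \equiv 0 \pmod 4$ is vacuous here once $g$ is squarefree and coprime to $x(x+1)$.

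It remains to handle $S_1$. The factors with residue field $\F_2$ are $\F_2[x]/x^a$ and $\F_2[x]/(x+1)^b$ (those that actually appear, i.e.\ with exponent $\geq 1$). Each is a chain ring with $2^{a}$, resp.\ $2^{b}$, elements. The condition $S_1 \in \{\F_2, \Z/4, \F_2[x]/x^2\}$ from \cref{thm:unitary} says the unique such local factor (after accounting for components) has size $1, 2,$ or $4$ — more precisely, $\F_2[x]/x^a \in \{\F_2, \F_2[x]/x^2\}$ forces $a \leq 2$, and likewise $b \leq 2$; note $\Z/4$ cannot occur here since our chain rings are $\F_2$-algebras, but $\F_2[x]/x^2$ plays its role. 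I would argue: at least one of $a, b$ must be $\geq 1$ (if both were $0$ then $R$ would have no $\F_2$-residue-field factor, hence be an $\F_2$-algebra that is a product of fields of size $\geq 4$... wait — if $a = b = 0$ then every local factor has residue field $\neq \F_2$, and $G_R(1)$ would need $S_1$ trivial, i.e.\ $G_R(1)$ is already the unitary Cayley graph of $\prod \F_{q_i}$, $q_i \geq 4$; but then by \cref{cor:no PST} applied with $r = 0$ — more directly, the analysis in \cref{thm:unitary} with $S_1 = \F_2$ still needs \emph{some} $\F_2$ factor to host the transfer endpoint $e_1$; without it there is no PST). Hence $\max\{a,b\} \geq 1$, giving the lower bound. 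Combining $a, b \leq 2$ with the exclusion of $S_1$ being too large: when both $a \geq 1$ and $b \geq 1$ there are two $\F_2$-residue factors, the graph has $2$ components each built from the \emph{other} factor; so effectively $S_1 = \F_2[x]/x^a$ on one side after using $(x+1)^b$ to split — and we need $\F_2[x]/x^a \in \{\F_2, \F_2[x]/x^2\}$, i.e.\ $a \leq 2$, and symmetrically $b \leq 2$, but the case $a = b = 2$ is exactly when \emph{both} surviving factors would be $\F_2[x]/x^2$ rather than one being $\F_2$; tracing through \cref{thm:unitary}'s case analysis (the $S_1 = \F_2$ branch needs the companion $\F_2$-factors to be literally $\F_2$, not $\F_2[x]/x^2$) shows $(a,b) = (2,2)$ fails while $(2,1), (1,2), (1,1), (2,0), (0,2), (1,0), (0,1)$ all succeed. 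This yields exactly the stated condition $2 \geq \max\{a,b\} \geq 1$ and $(a,b) \neq (2,2)$.

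I expect the main obstacle to be the bookkeeping in this last paragraph: carefully tracking, via the connected-component reduction that precedes \cref{thm:unitary}, which $\F_2$-residue-field factor becomes the ``$S_1$'' and which ones must degenerate to plain $\F_2$, and thereby pinning down precisely why $(a,b) = (2,2)$ is the unique excluded pair among those with $\max \leq 2$. Everything else — the CRT decomposition, identifying residue field sizes, reducing squarefreeness of $g$ to $\mu(T) \neq 0$, and checking $2^{\deg p_j} \equiv 0 \pmod 4$ — is routine given the results already established.
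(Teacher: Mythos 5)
Your proposal is correct and is essentially the derivation the paper intends: the corollary is stated as a direct consequence of \cref{thm:unitary}, and your route (CRT to split $\F_q[x]/f$ into chain rings, kill $q\neq 2$ via \cref{cor:no PST}, identify the $\F_2$-residue-field factors as $\F_2[x]/x^a$ and $\F_2[x]/(x+1)^b$, force $g$ squarefree from $\mu(R_2)\neq 0$, note $2^{\deg p_j}\equiv 0\pmod 4$ is automatic, and read off $1\le\max\{a,b\}$ and $a,b\le 2$ from $S_1\in\{\F_2,\Z/4,\F_2[x]/x^2\}$) is exactly the intended translation. The one step you should tighten is the exclusion of $(a,b)=(2,2)$: you justify it by ``tracing through \cref{thm:unitary}'s case analysis,'' but \cref{thm:unitary} is only stated for rings already reduced to the form $S_1\times\prod_i\F_{q_i}$ with a single non-field local factor, so it cannot be applied when both $\F_2[x]/x^2$ and $\F_2[x]/(x+1)^2$ occur. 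The clean argument is the unnumbered proposition at the start of \cref{sec:unitary}: two local factors $T$ with $\mu(T)=0$ force $\Delta=R$, and then \cref{cor:orthogonal} (non-degeneracy of $\psi$) forces $s=0$, so there is no PST. With that substitution the proof is complete; everything else in your write-up, including the slightly meandering justification that $\max\{a,b\}\ge 1$ (which ultimately rests on \cref{prop:upper_bound} forcing $s=0$ when there is no $\F_2$-residue-field factor), is sound.
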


\section*{\textbf{Acknowledgements}}
The first-named author would like to thank Professor Chris Godsil and Professor Marko Petkovi\'c for some helpful correspondence. He is also thankful to Dr. Ricardo Buring for his help with Sagemath.  The second-named author gratefully acknowledges the Vietnam Institute for Advanced Study in Mathematics (VIASM) for hospitality and support during a visit in 2025. Finally, we thank Professor Yotsanan Meemark for sending us a copy of \cite{pst-gcd-new}.

\providecommand{\bysame}{\leavevmode\hbox to3em{\hrulefill}\thinspace}
\providecommand{\MR}{\relax\ifhmode\unskip\space\fi MR }
\providecommand{\MRhref}[2]{%
  \href{http://www.ams.org/mathscinet-getitem?mr=#1}{#2}
}

\end{document}